\DeclareMathAlphabet{\mathcalligra}{T1}{calligra}{m}{n}
\theoremstyle{plain}
\newtheorem{thm}{Theorem}[section]
\newtheorem{lem}[thm]{Lemma}
\newtheorem{cor}[thm]{Corollary}
\newtheorem{prop}[thm]{Proposition}
\theoremstyle{definition}
\newtheorem{defn*}{Definizione}
\theoremstyle{remark}
\newtheorem{rem}{Remark}
\numberwithin{equation}{section}
\newcommand\gt{\mathfrak{g}}
\newcommand\hg{\mathfrak{h}}
\newcommand\Sp{\mathbf{Sp}}
\def\re{\mathrm{Re}\,}
\newcommand\R{\mathbb{R}}
\newcommand\SL{\mathbf{SL}}
\newcommand\SU{\mathbf{SU}}
\newcommand\GL{\mathbf{GL}}
\newcommand\SO{\mathbf{SO}}
\newcommand\C{\mathbb{C}}
\newcommand\Z{\mathbb{Z}}
\newcommand\zt{\mathfrak{z}}
\newcommand{\Spin}{\mathbf{Spin}}
\newcommand{\G}{\mathbf{G}_2}
\crefname{section}{§}{§§}
\Crefname{section}{§}{§§}
\begin{document} 
\title{Invariant cocalibrated $\G$-structures on nilmanifolds}
\author{Leonardo Bagaglini}
\affil{Dipartimento di Matematica ed Informatica {``}Ulisse Dini{"}\\Università degli Studi di Firenze\\Viale Morgagni, 67/a - 50134 Firenze, Italy\\\textnormal{{leonardo.bagaglini@unifi.it}}}
\date{November 6, 2015}
\maketitle
\abstract{We classify nilmanifolds admitting invariant cocalibrated $\G$-structures.}
\tableofcontents 
\section{Introduction}\par
A seven dimensional connected, oriented, Riemannian manifold $M$ with Holonomy contained in $\G$ is characterized by existence of a $\G$-reduction $\mathcal{P}$ of its orthogonal frame bundle $\mathcal{F}$, on which the Cartan form of the Levi-Civita connection restricts; in other words by the vanishing of the intrinsic torsion $\tau$ of $\mathcal{P}$, \cite{Sal}.\\
Alternatively one can observe that $\G\subset\SO(7)$ is defined as the stabiliser, under the $\GL(7,\R)-$action, of a stable $3-$form $\varphi_0$, or equivalently (the component of) the stabiliser of a stable $4-$form $\phi_0$, both defining the underlying euclidean metric and related by $\phi_0=*\varphi_0$.\\ Consequently the principal bundle $\mathcal{P}$ is defined by a form $\varphi$, or $\phi$, satisfying $\xi_*\varphi=\varphi_0$ and $\xi_*\phi=\phi_0$ for a suitable frame $\xi$. Then $\tau$ is related to $\nabla\varphi$, or $\nabla\phi$, and its vanishing is equivalent to both $d\varphi=0$ and $d\phi=0$, \cite{FG}.\par
For general linear principal $\G-$bundles, $\tau$ equals $\tau_0+\tau_1+\tau_2+\tau_3$, where each $\tau_h$ takes values in a bundle modelled on an irreducible $\G-$module $\chi_h$, \cite{FG}. Conditions $d\varphi=0$ and $d\phi=0$ are equivalent to $\tau_0=\tau_1=\tau_3=0$ and $\tau_1=\tau_2=0$ respectively; if the first holds we say the structure \emph{calibrated} and, if the second holds, \emph{cocalibrated}.  For instance codimension one submanifolds of $\Spin(7)$-manifolds and closed seven dimensional spin manifolds have naturally cocalibrated $\G$-structures \cite{CN}.\\
It is well known that other combinations of torsions are admissible, see \cite{Fer}, \cite{Cab}, \cite{CMS}, \cite{Fei}.\par
Generally the calibrated condition is more restrictive than its counterpart, in fact compact examples of manifolds with calibrated structures are not easy to exhibit. In \cite{CF} Conti and Fern\'andez classified nilmanifolds with a (left) invariant calibrated $\G$-structure. Some homogeneous cocalibrated structure was studied  in \cite{Cab}, and by Freibert in \cite{Fre1} and \cite{Fre2}. In this paper we present the classification of nilmanifolds admitting cocalibrated structures.\\
Any nilmanifold equipped with an invariant metric is spin, thus, by Theorem 1.8 from \cite{CN}, it has a cocalibrated $\G$-structure. As a result we obtain that nilmanifolds defined by Lie algebras in Tables \ref{tab1}, \ref{tab2}, \ref{tab4} and \ref{tab5} admit a cocalibrated $\G$-structure, but not an invariant one.\par\medskip
A nilmanifold is a compact manifold on which a nilpotent Lie group $G$ acts transitively, namely homogeneous of type $G/N$ with $N$ a discrete subgroup\cite{Mal}.
The Lie group $G$ can define a nilmanifold if and only if its Lie algebra $\gt$ admits a basis $\left\{e_1,\dots,e_7\right\}$ with rational structure constants; in this case the De Rham cohomology of $G/N$ can be calculate using invariant differential forms and it is isomorphic to the Chevalley-Eilenberg cohomology of $\gt$, see \cite{Nom}.
Invariant forms on $G/N$ are uniquely determined by forms on $\gt$, hence we can restrict our attention to seven dimensional nilpotent  Lie algebras (classified in \cite{Gon}).\par\medskip
The paper is structured as follows.\\
In Section \cref{sec:sec2} we recall theory of stable forms in order to describe algebraic properties of $\G$.\\
In Section \cref{sec:sec3} we classify $\G$-structures on seven dimensional nilpotent Lie algebras with respect to particular fibrations over six dimensional nilpotent Lie algebras equipped with $\SU(3)$-structures, and produce some obstructions to their existence.\\
Finally in Sections \cref{sec:sec4} and \cref{sec:sec5} we classify all seven dimensional nilpotent Lie algebras admitting cocalibrated $\G$-structures.\\
Appendix \cref{sec:app} contains some explicit calculations omitted from the rest of the paper.
\section{Stable forms}\label{sec:sec2}
In this section we describe some algebraic properties of $\G$ in terms of stable forms. We start recalling few classical general results on $\GL$-orbits of forms ( \cite{KS} ) and next we focus on form spaces over six and seven dimensional real vector spaces ( \cite{Rei} ). For a more detailed treatment see \cite{CLSS}.\medskip\par
Consider the natural $\GL(V)$-action on $\Lambda^kV^*$.\\
A $k$-form $\rho$ on an $n-$dimensional real vector space $V$ is said to be \emph{stable} if its orbit is open in $\Lambda^k V^*$.
\begin{thm}
Suppose $k\leq [\frac{n}{2}]$. Then stable $k$-forms (equivalently $(n-k)$-forms) exist if and only if $k\leq 2$ or $k=3$ and $n\in\left\{6,7,8\right\}$.\par
Moreover, $k$ fixed, there are finite many open orbits.
\end{thm}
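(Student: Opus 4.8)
The plan is to recast stability as a dimension count for $\GL(V)$-orbits and to reduce the classification to an elementary inequality, deferring the construction of the exceptional forms to the very end. The parenthetical equivalence between $k$-forms and $(n-k)$-forms is classical --- contraction with a fixed volume element identifies the two orbit problems (up to a determinant twist), so throughout I assume $1\le k\le[\tfrac n2]$. For $\rho\in\Lambda^kV^*$ the orbit $\GL(V)\cdot\rho$ is an immersed submanifold of $\Lambda^kV^*$ of dimension $n^2-\dim\mathrm{Stab}(\rho)$; hence $\rho$ is stable, i.e.\ its orbit is open, precisely when $\dim\mathrm{Stab}(\rho)=n^2-\binom nk$, and in particular a stable $k$-form can exist only if $\binom nk\le n^2$. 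The first step is therefore to decide for which admissible $(n,k)$ this numerical inequality holds.

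For $k\le 2$ the inequality $\binom nk\le n^2$ is automatic. For $k\ge 3$, since $j\mapsto\binom nj$ is nondecreasing on $\{0,\dots,[\tfrac n2]\}$ we have $\binom nk\ge\binom n3=\tfrac16 n(n-1)(n-2)$, and $\binom n3\le n^2$ is equivalent to $n^2-9n+2\le 0$, hence to $n\le 8$; combined with $3\le k\le[\tfrac n2]$ this leaves only $(n,k)\in\{(6,3),(7,3),(8,3)\}$, the pair $(8,4)$ being excluded by $\binom84=70>64$. Thus no stable $k$-form can exist outside the cases listed in the theorem: this settles the ``only if'' direction, and it is nothing more than bookkeeping.

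For the converse one must exhibit stable forms in the remaining cases. For $k=1$ any nonzero covector has dense orbit, and for $k=2$ a $2$-form of maximal rank does too, rank being the complete $\GL(V)$-invariant of a $2$-form and the top rank forming a single open orbit. The three exceptional cases $k=3$, $n\in\{6,7,8\}$ are the classical ones: one writes down the standard model --- the real part of a nonzero decomposable complex $3$-form for $n=6$, the associative $\G$-form $\varphi_0$ for $n=7$, an $\SL(3,\R)$-type $3$-form for $n=8$ --- and checks that its stabiliser has dimension $16=\dim\SL(3,\C)$, $14=\dim\G$, $8=\dim\SL(3,\R)$ respectively, matching $n^2-\binom n3$ exactly; equivalently one exhibits a nonzero $\GL(V)$-relative invariant, whose non-vanishing locus is then an open orbit. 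I expect this to be the real work: the inequality $\binom nk\le n^2$ is only necessary, so in each of the three exceptional cases one genuinely has to compute a stabiliser (or display a relative invariant) to see that an open orbit really occurs, and in fact each of these dimensions carries several inequivalent open orbits which the enumeration must separate.

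Finally, finiteness of the number of open orbits. Let $\Lambda^kV^*_{\mathrm{reg}}$ be the locus of forms whose orbit has maximal dimension, equivalently where the infinitesimal action $\gl(V)\to\Lambda^kV^*$, $A\mapsto A\cdot\rho$, attains its generic rank; this is the complement of the common zero set of finitely many minors of that linear map, hence a Zariski-open, in particular semialgebraic, subset of $\Lambda^kV^*$, and so it has finitely many connected components. Whenever open orbits exist the maximal orbit dimension equals $\binom nk$, so every orbit meeting $\Lambda^kV^*_{\mathrm{reg}}$ is open; the open orbits therefore partition $\Lambda^kV^*_{\mathrm{reg}}$, each one is consequently a union of connected components of $\Lambda^kV^*_{\mathrm{reg}}$, and there are only finitely many of those.
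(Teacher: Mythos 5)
The paper itself does not prove this theorem: it is recalled as a classical result with references, so there is no in-paper argument to compare against, and your attempt has to stand on its own. Judged that way, two of its three parts are complete and correct. The ``only if'' direction via the dimension count (an open orbit forces $\binom{n}{k}\le n^2$, and the arithmetic reduction to $(n,k)\in\{(6,3),(7,3),(8,3)\}$ together with the exclusion of $(8,4)$) is fine, as is the finiteness argument: the maximal-rank locus of the infinitesimal action $\gl(V)\to\Lambda^kV^*$ is Zariski-open, hence semialgebraic with finitely many connected components, and when open orbits exist it is exactly their union, each open orbit being clopen in it. That is a clean and valid general proof that any linear algebraic action has finitely many open orbits.

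The gap is in the existence half, which is where the actual content of the theorem sits for $k=3$, $n\in\{6,7,8\}$. You name the model forms and the expected stabiliser dimensions $16$, $14$, $8$, but the verification that the annihilator $\{A\in\gl(V)\,:\,A\cdot\rho=0\}$ of each model has exactly that dimension is deferred (you acknowledge this), so nothing is yet proved in precisely the cases that matter. Moreover, the shortcut you present as ``equivalent'' --- exhibit a nonzero relative invariant, whose non-vanishing locus ``is then an open orbit'' --- is not a valid principle: the existence of a nontrivial relative invariant does not imply the existence of an open orbit (binary quartics under $\GL(2,\R)$ carry relative invariants, yet no open orbit can exist since $\dim\GL(2,\R)=4<5$), and even when open orbits do exist the non-vanishing locus is in general a union of several orbits --- as it is here, e.g.\ the two open orbits $\Lambda_\pm(V)$ in $\Lambda^3(\R^6)^*$ separated by the sign of $\lambdaup$; identifying that locus with a union of open orbits is exactly the normal-form work one is trying to avoid. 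To close the proof you must either carry out the three Lie-algebra stabiliser computations for the explicit models (a finite linear-algebra check, consistent with the normal forms the paper quotes for $n=6,7$) or give genuine normal-form arguments; the relative-invariant remark should be dropped or corrected.
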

\begin{thm} Suppose $n$ even and $k\in\left\{2,n-2\right\}$ or $n\in\left\{6,7,8\right\}$ and $k\in\left\{3,n-3\right\}$. Then there exists an $\GL(V)$-equivariant map 
\begin{equation}\label{vol}
\varepsilon:\Lambda^kV^*\rightarrow \Lambda^nV^*,
\end{equation}
homogeneous of degree $(\frac{n}{k})$, such that $\mathrm{Ker}(\varepsilon)=\left\{\text{non stable forms}\right\}$.\par
Moreover if we consider a stable $k$-form $\rho$ and define the \emph{dual} form $\hat{\rho}\in\Lambda^{n-k}V^*$ as $$d\varepsilon_\rho(\alpha)=\hat{\rho}\wedge \alpha,\quad\forall \alpha\in\Lambda^kV^*,$$
then $\hat{\rho}$ is stable, the identity component of its stabiliser equals that one of $\rho$ and verifies 
$$\hat{\rho}\wedge\rho=\frac{n}{k}\varepsilon(\rho).$$
\end{thm}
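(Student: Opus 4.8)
The plan is to isolate a small formal core — the finiteness of open orbits from Theorem 2.1, the naturality of $\varepsilon$, and Euler's identity for homogeneous maps — and to import the genuinely computational input (the explicit shape of $\varepsilon$) case by case. First I would construct $\varepsilon$. When $k=2$ and $n=2m$, take $\varepsilon(\rho)=\tfrac{1}{m!}\,\rho^{\wedge m}\in\Lambda^nV^*$: this is visibly $\GL(V)$-equivariant, homogeneous of degree $m$, and $\rho^{\wedge m}$ vanishes precisely when $\rho$ is degenerate, i.e.\ non-stable. When $k=3$ and $n\in\{6,7,8\}$ I would follow the construction recalled in the cited literature: from $\rho$ one builds a natural object — an endomorphism of $V$ valued in $\Lambda^nV^*$ for $n=6$, a symmetric bilinear form valued in $\Lambda^nV^*$ for $n=7,8$ (for instance $(u,v)\mapsto \iota_u\rho\wedge\iota_v\rho\wedge\rho$) — whose determinant is a polynomial relative invariant $P$ of $\GL(V)$ on $\Lambda^3V^*$; one then obtains $\varepsilon$ by extracting the suitable root of $P$ so that it is valued in the line $\Lambda^nV^*$. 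In every case the homogeneity degree is forced to be $n/k$: restricting the equivariance $\varepsilon(A\cdot\rho)=A\cdot\varepsilon(\rho)$ to $A=t\,\mathrm{Id}$, which acts by $t^{-k}$ on $\Lambda^kV^*$ and $t^{-n}$ on $\Lambda^nV^*$, gives $k\deg(\varepsilon)=n$. That $\mathrm{Ker}(\varepsilon)$ is exactly the non-stable locus is where Theorem 2.1 enters: the union of the finitely many open orbits is open and dense, its complement is a proper closed $\GL(V)$-invariant subset and, in these dimensions, a hypersurface, and $P$ (hence $\varepsilon$) is a defining equation for it.

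For the dual form: the differential $d\varepsilon_\rho$ is a linear map $\Lambda^kV^*\to\Lambda^nV^*$, i.e.\ an element of $\Lambda^kV\otimes\Lambda^nV^*\cong\Lambda^{n-k}V^*$; I define $\hat\rho\in\Lambda^{n-k}V^*$ to be its image, so that $d\varepsilon_\rho(\alpha)=\hat\rho\wedge\alpha$ for all $\alpha\in\Lambda^kV^*$ by construction. Because $\varepsilon$ is $\GL(V)$-equivariant so is $\rho\mapsto d\varepsilon_\rho$, hence $\rho\mapsto\hat\rho$ is an equivariant map $\Lambda^kV^*\to\Lambda^{n-k}V^*$, homogeneous of degree $n/k-1$. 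The identity $\hat\rho\wedge\rho=\tfrac nk\,\varepsilon(\rho)$ is then precisely Euler's relation, $d\varepsilon_\rho(\rho)=\deg(\varepsilon)\,\varepsilon(\rho)$.

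It remains to compare stabilisers and prove $\hat\rho$ is stable. Let $H$ be the identity component of $\mathrm{Stab}_{\GL(V)}(\rho)$. Since $H$ fixes $\rho$, equivariance makes it fix $\varepsilon(\rho)$, hence the equivariant tensor $d\varepsilon_\rho$, hence $\hat\rho$; so $\mathrm{Stab}^0(\rho)\subseteq\mathrm{Stab}^0(\hat\rho)$. For the reverse inclusion I would show the construction is involutive: running the analogous map for $(n-k)$-forms on $\hat\rho$ returns a nonzero multiple of $\rho$ under the identification $\Lambda^{n-k}V^*\leftrightarrow\Lambda^kV^*$. This gives $\mathrm{Stab}^0(\hat\rho)\subseteq\mathrm{Stab}^0(\rho)$, hence equality, and also that $\hat\rho$ is stable: its stabiliser then has dimension $n^2-\binom nk=\dim\GL(V)-\dim\Lambda^{n-k}V^*$, so its orbit is open.

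The formal steps above are routine; the real obstacle is everything packed into the phrase ``the suitable root of $P$'': that the complement of the open orbits is a hypersurface carrying a $\GL(V)$-relative invariant, that the root can be taken globally as a map into $\Lambda^nV^*$ (delicate when the exponent is even and $P$ changes sign between open orbits, as for $n=6$, where one must pass to $|P|$ and the resulting $\varepsilon$ is only continuous), and the involution $\hat{\hat\rho}\propto\rho$ with the correct positive constant. In practice one verifies these orbit by orbit from the explicit normal forms of stable $3$-forms in dimensions $6,7,8$ (and from the Pfaffian in the case $k=2$) — exactly the classical computations I would cite rather than redo.
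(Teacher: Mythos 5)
The first thing to say is that the paper contains no proof of this statement: it is quoted as a classical result with references to the literature (\cite{KS}, \cite{Rei}, \cite{CLSS}), so there is no argument of the author's to compare yours against. Judged on its own terms, your skeleton is sound and is the standard one: the scaling argument with $A=t\,\mathrm{Id}$ does force the degree $n/k$; the identification $\Hom(\Lambda^kV^*,\Lambda^nV^*)\cong\Lambda^{n-k}V^*$ via the wedge pairing makes the definition of $\hat\rho$ legitimate; Euler's relation for a degree-$n/k$ homogeneous map gives $\hat\rho\wedge\rho=\frac{n}{k}\varepsilon(\rho)$ at once; equivariance gives the inclusion $\mathrm{Stab}^0(\rho)\subseteq\mathrm{Stab}^0(\hat\rho)$; and your dimension count (equal identity components force $\dim$ of the orbit of $\hat\rho$ to be $\binom{n}{k}=\binom{n}{n-k}$, hence open) correctly converts involutivity into stability of $\hat\rho$.

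The genuine gaps are exactly the two points you park under ``cite rather than redo'', and they carry essentially all the content of the theorem. First, the existence of $\varepsilon$ with $\mathrm{Ker}(\varepsilon)$ equal to the non-stable locus is not formal: one must produce the relative invariant case by case and check that it vanishes precisely off the open orbits, and the root extraction is genuinely delicate — for $n=6$, $k=3$ the invariant $\lambdaup$ changes sign between the two open orbits, so $\varepsilon=\sqrt{|\lambdaup|}$ is only continuous across the wall (smooth where it matters, on stable forms); for $n=8$, $k=3$ the degree $n/k=8/3$ is not an integer, so $\varepsilon$ cannot be polynomial at all and one needs the degree-$16$ invariant together with a sixth root. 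Second, the reverse inclusion of stabilisers rests entirely on the involutivity claim $\hat{\hat\rho}\propto\rho$ with nonzero constant, which you assert but do not prove; without it your argument only yields $\mathrm{Stab}^0(\rho)\subseteq\mathrm{Stab}^0(\hat\rho)$ and says nothing about stability of $\hat\rho$. In the references these points are settled by explicit computation on the normal forms of each open orbit (Pfaffian for $k=2$, Hitchin's and Reichel's computations for $3$-forms in dimensions $6,7,8$). So your proposal is a correct reduction of the theorem to those classical verifications — which is in effect what the paper itself does by citation — but it is not a self-contained proof.
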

\subsection{Stable forms in six dimensions}\label{sec:sixdim}
Consider $n=6$.\par
In $\Lambda^2V^*$ there exists only one open orbit. In particular we have the following
\begin{thm}
The set $$\Lambda_0(V)=\left\{\omega\in\Lambda^2V^*\;|\;\omega^3\neq 0\right\},$$
is the only open orbit in $\Lambda^2V^*$.
Thus, given $\omega\in\Lambda_0(V)$, its stabiliser is isomorphic to $\Sp(6,\R)$, its volume form $\varepsilon(\omega)$ can be chosen equal to $\frac{1}{3!}\omega^3$ and its dual form $\sigma$ equal to $\frac{1}{2}\omega^2.$\par
Moreover, there exists a suitable co-frame $\{f^1,\dots,f^6\}$ such that 
\begin{align*}
&\omega=f^{12}+f^{34}+f^{56},\\&\sigma=f^{1234}+f^{1256}+f^{3456},\\&\varepsilon(\omega)=f^{123456}.
\end{align*}
\end{thm}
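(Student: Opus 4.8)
The plan is to deduce the whole statement from the linear Darboux theorem for non-degenerate $2$-forms, supplemented by a short density argument and a couple of elementary computations in a Darboux co-frame. \textbf{Step 1: $\omega^3\neq 0$ is equivalent to non-degeneracy.} Call $\omega$ non-degenerate if $v\mapsto \iota_v\omega$ is an isomorphism $V\to V^*$. If some $v\neq 0$ satisfies $\iota_v\omega=0$, complete $v$ to a basis and let $\{f^1,\dots,f^6\}$ be the dual co-frame with $f^1$ dual to $v$; then $\omega$ involves only $f^2,\dots,f^6$, so $\omega^3\in\Lambda^6\,\mathrm{span}(f^2,\dots,f^6)=0$. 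Conversely, if $\omega$ is non-degenerate, the usual inductive argument — choose $e_1$ with $\iota_{e_1}\omega\neq 0$, then $e_2$ with $\omega(e_1,e_2)=1$, pass to the $\omega$-orthogonal complement of $\mathrm{span}(e_1,e_2)$, which is again non-degenerate, and iterate — yields a co-frame $\{f^1,\dots,f^6\}$ in which $\omega=f^{12}+f^{34}+f^{56}$, and a direct expansion gives $\omega^2=2(f^{1234}+f^{1256}+f^{3456})$ and $\omega^3=6\,f^{123456}\neq 0$. This already produces the last displayed normal form.

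\textbf{Step 2: $\Lambda_0(V)$ is the unique open orbit, with stabiliser $\Sp(6,\R)$.} Given two forms in $\Lambda_0(V)$, Step 1 equips each with a Darboux co-frame, and the linear isomorphism of $V$ matching the two dual bases carries one form onto the other; hence $\GL(V)$ acts transitively on $\Lambda_0(V)$. Since $\Lambda_0(V)$ is the non-vanishing locus of the polynomial map $\omega\mapsto\omega^3$, it is open and non-empty, so it is an open orbit. Its complement is the zero locus of that nonzero polynomial map, a proper algebraic subset with empty interior, so $\Lambda_0(V)$ is dense; therefore every open orbit meets $\Lambda_0(V)$ and thus coincides with it. (Alternatively one may invoke $\dim\GL(6,\R)-\dim\Sp(6,\R)=36-21=15=\dim\Lambda^2V^*$.) The stabiliser of $\omega$ is by definition the symplectic group of $(V,\omega)$, which in a Darboux co-frame is $\Sp(6,\R)$.

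\textbf{Step 3: volume form, dual form, and the remaining normal forms.} I would check that $\varepsilon(\omega)=\tfrac{1}{3!}\,\omega^3$ is a legitimate choice of the map $\varepsilon$ of \eqref{vol}: it is $\GL(V)$-equivariant because the exterior product is natural, homogeneous of degree $3=n/k$, and by Step 1 its kernel is exactly the set of degenerate, i.e. non-stable, forms. Differentiating, $d\varepsilon_\omega(\alpha)=\tfrac12\,\omega^2\wedge\alpha$ for every $\alpha\in\Lambda^2V^*$, so the dual form is $\sigma=\tfrac12\,\omega^2$; one then verifies $\sigma\wedge\omega=\tfrac12\,\omega^3=3\,\varepsilon(\omega)$, as demanded by the general duality statement recalled above, and the equality of identity components of the stabilisers is the content of that statement as well. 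Evaluating in the Darboux co-frame of Step 1 gives $\tfrac12\,\omega^2=f^{1234}+f^{1256}+f^{3456}$ and $\tfrac{1}{3!}\,\omega^3=f^{123456}$.

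No step is genuinely difficult. The point requiring a little care is the density argument used to get \emph{uniqueness} of the open orbit (and, if one prefers the dimension count instead, knowing $\dim\Sp(6,\R)=21$); beyond that, the only thing to watch is the bookkeeping of signs when expanding $\omega^2$ and $\omega^3$ in the Darboux co-frame.
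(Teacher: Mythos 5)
Your proof is correct: the linear Darboux normal form gives the equivalence of $\omega^3\neq 0$ with non-degeneracy, transitivity of the $\GL(V)$-action on $\Lambda_0(V)$, the identification of the stabiliser with $\Sp(6,\R)$, and (together with the density of the non-vanishing locus of $\omega\mapsto\omega^3$) the uniqueness of the open orbit, while the computations $d\varepsilon_\omega(\alpha)=\tfrac12\omega^2\wedge\alpha$ and $\sigma\wedge\omega=3\,\varepsilon(\omega)$ settle the claims about $\varepsilon(\omega)=\tfrac{1}{3!}\omega^3$ and $\sigma=\tfrac12\omega^2$. The paper itself gives no proof of this theorem — it is recalled as a classical fact on stable forms with references to the literature — and your argument is exactly the standard one that those sources use, so there is nothing substantive to compare beyond noting that your write-up is complete.
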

The space $\Lambda^3V^*$ has, instead, two different open orbits. They can be distinguished as follows (see \cite{Hit}). Consider $\rho\in\Lambda^3V^*$ and define\footnote{Let $\iota_x$ be the contraction by the vector $x$.}
$$k_\rho:V\ni x\mapsto \iota_x\rho\wedge\rho\in\Lambda^5V^*.$$
Thanks to the isomorphism $\theta^{-1}: V\otimes\Lambda^6V^*\ni x\otimes\alpha\mapsto \iota_x\alpha\in\Lambda^5V^*$ we can define 
\begin{gather*}
K_\rho:V\xrightarrow{k_\rho}\Lambda^5V^*\xrightarrow{\theta}V\otimes\Lambda^6V^*,\\\text{and}\\
\lambdaup(\rho)=\frac{1}{6}\mathrm{trace}({K_{\rho}}^2)\in\left({\Lambda^6V^*}\right)^{\otimes 2}.
\end{gather*}
\begin{thm}
Consider the sets\footnote{We say $\alpha\in\left({\Lambda^6V^*}\right)^{\otimes 2}$ positive (resp. negative) and write $\alpha>0$ (resp. $\alpha<0$) if $\alpha=\varepsilon\otimes\varepsilon$ (resp. $\alpha=-\varepsilon\otimes\varepsilon$).} $$\Lambda_+(V)=\left\{\rho\in\Lambda^3 V^*\,|\,\lambdaup(\rho)>0\right\}\quad \text{and}\quad\Lambda_-(V)=\left\{\rho\in\Lambda^3 V^*\,|\,\lambdaup(\rho)<0\right\}.$$ They are the only open orbits in $\Lambda^3V^*$.\par
If $\psi\in\Lambda_+(V)$ the identity component of its stabiliser is isomorphic to $\SL(3,\R)\times\SL(3,\R)$ and in a suitable co-frame $\left\{f^1,\dots,f^6\right\}$ results
$$\psi=f^{123}+f^{456}.$$\par
If $\psi\in\Lambda_-(V)$ the identity component of its stabiliser is isomorphic to $\SL(3,\C)$ and in a suitable co-frame $\left\{f^1,\dots,f^6\right\}$ results $$\psi=-f^{246}+f^{136}+f^{145}+f^{235}.$$\par
Moreover, if we consider a stable $3$-form $\psi$, fix a volume form $\varepsilon(\psi)\in\Lambda^6V^*$ and define  $$J_\psi=\frac{1}{\sqrt{|\lambdaup(\psi)|}}K_\psi\in V^*\otimes V,$$ then
\begin{align*}
\psi\in\Lambda_+(V)&\Leftrightarrow J_\psi\;\;\text{is a para-complex structure},\\\psi\in\Lambda_-(V)&\Leftrightarrow J_\psi\;\;\text{is a complex structure},
\end{align*}
and in both cases the dual form is $J_{\psi}^*\psi$.
\end{thm}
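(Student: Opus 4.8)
\emph{Setup.} The plan is to treat the two sign conditions on $\lambdaup$ in parallel, exploiting the endomorphism $K_\rho$ to build a canonical complex or para-complex structure on $V$ and then reading the normal forms and stabilisers off it. First I would note that $K_\rho$, and hence $\lambdaup$, is assembled from contractions and wedge products alone, so $\rho\mapsto K_\rho$ is $\GL(V)$-equivariant and $\rho\mapsto\lambdaup(\rho)$ is a $\GL(V)$-equivariant homogeneous polynomial map of degree $4$ into $\left({\Lambda^6V^*}\right)^{\otimes 2}$; since $\GL(V)$ acts there through the square of the determinant, the sign of $\lambdaup(\rho)$ against a fixed $\varepsilon\otimes\varepsilon$ is a genuine invariant. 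Thus $\Lambda_+(V)$ and $\Lambda_-(V)$ are well defined, $\GL(V)$-invariant and open, and exhibiting one $\rho$ with $\lambdaup(\rho)\neq0$ makes $\{\lambdaup=0\}$ a proper algebraic subset, hence closed with empty interior, so that $\Lambda_+(V)\sqcup\Lambda_-(V)$ is open and dense.

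\emph{The key identity and the dichotomy.} The computational core is that $K_\rho^{\,2}$ is a scalar operator, hence --- by the very definition of $\lambdaup$ ---
\begin{equation*}
K_\rho^{\,2}=\lambdaup(\rho)\,\Id_V ,
\end{equation*}
together with $\tr K_\rho=0$. The trace is a two-line check: tracing $\theta\circ k_\rho$ over a basis gives $\sum_i e^i\wedge\iota_{e_i}\rho\wedge\rho=3\,\rho\wedge\rho$, which vanishes because an odd-degree form wedged with itself is zero. The scalar identity I would obtain either from a direct coordinate computation or, more cheaply, by noting that both sides are $\GL(V)$-equivariant polynomial maps $\Lambda^3V^*\to\mathrm{End}(V)\otimes\left({\Lambda^6V^*}\right)^{\otimes 2}$ of degree $4$, so the equality is settled by checking it on the dense union of the finitely many (Theorem~2.1) open orbits, i.e.\ on one representative of each. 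Granting this, on $\{\lambdaup\neq0\}$ the normalisation $J_\psi=K_\psi/\sqrt{|\lambdaup(\psi)|}$ satisfies $J_\psi^{\,2}=\operatorname{sign}(\lambdaup(\psi))\,\Id_V$ and $\tr J_\psi=0$; hence $J_\psi$ is a complex structure when $\lambdaup(\psi)<0$ and a para-complex structure when $\lambdaup(\psi)>0$, the vanishing trace forcing its $\pm1$-eigenspaces $V_\pm$ to be $3$-dimensional. This already yields the stated equivalences.

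\emph{Normal forms, stabilisers, exhaustiveness, duality.} For $\psi\in\Lambda_-(V)$ one checks (using the scalar identity) that $\psi$ has no component of type $(2,1)+(1,2)$ relative to $J_\psi$, so $\psi=\im\Omega$ for a non-zero $\C$-linear $3$-form $\Omega$ on the complex vector space $(V,J_\psi)$; a complex basis with $\Omega=z^1\wedge z^2\wedge z^3$, written $z^j=f^{2j-1}+if^{2j}$, then gives $\psi=-f^{246}+f^{136}+f^{145}+f^{235}$. For $\psi\in\Lambda_+(V)$ one checks similarly that $\psi$ respects the splitting $V=V_+\oplus V_-$, so $\psi=\psi_+\oplus\psi_-$ with each $\psi_\pm$ a non-zero $3$-form on a $3$-space, i.e.\ a volume form, and adapted bases give $\psi=f^{123}+f^{456}$. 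In particular every form in $\Lambda_\pm(V)$ is $\GL(V)$-equivalent to the corresponding normal form, so $\Lambda_\pm(V)$ is a single orbit. In both cases its stabiliser fixes $J_\psi$ --- which is functorial in $\psi$ --- hence commutes with it: in the complex case it then preserves $\Omega$ and so equals $\SL(3,\C)$; in the para-complex case it preserves the splitting without swapping the summands (it fixes $J_\psi$, not $-J_\psi$) and is volume-preserving on each $V_\pm$, so it equals $\SL(3,\R)\times\SL(3,\R)$. Both are connected of dimension $16$, so each orbit has dimension $36-16=20=\dim\Lambda^3V^*$ and is open; and since any open orbit meets the dense set $\Lambda_+(V)\sqcup\Lambda_-(V)$ and is $\GL(V)$-invariant, it coincides with one of them --- so these are the only two. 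Finally, $\hat\psi$ and $J_\psi^*\psi$ are both $\GL(V)$-equivariant functions of $\psi$, so on each (single) orbit either is determined by its value at one point; the identity $\hat\psi=J_\psi^*\psi$ thus reduces to one computation per orbit --- evaluating $\varepsilon$ and its differential at the two normal forms via Theorem~2.2 and comparing.

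\emph{Main obstacle.} The genuine work is the scalar identity $K_\rho^{\,2}=\lambdaup(\rho)\,\Id_V$: it is the one unavoidable computation, and the complex/para-complex dichotomy, the $3{+}3$ splitting of $V$, and every stabiliser identification are formal consequences of it and of $\tr K_\rho=0$. A secondary, much lighter point is the verification that $\psi$ has no $(2,1)+(1,2)$ part (respectively no $V_+\wedge V_-$ part), which is a short piece of eigenvalue bookkeeping once the identity is available.
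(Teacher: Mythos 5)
Your proposal is essentially a reconstruction of Hitchin's original argument, which is also what this paper relies on: note that the paper does not prove this theorem at all — it is quoted as a known result (with references to Reichel, Hitchin and Cort\'es--Leistner--Sch\"afer--Schulte-Hengesbach), so there is no internal proof to compare against, and your route is the standard one rather than a genuinely different alternative. The architecture is sound: $\tr K_\rho=0$ and the scalar identity $K_\rho^2=\lambdaup(\rho)\,\Id$ are indeed the computational core, the (para-)complex dichotomy follows, and the orbit/openness count via the $16$-dimensional stabilisers $\SL(3,\R)\times\SL(3,\R)$ and $\SL(3,\C)$ correctly yields that $\Lambda_+(V)$ and $\Lambda_-(V)$ are the only open orbits. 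Two points deserve tightening. First, your equivariance shortcut for $K_\rho^2=\lambdaup(\rho)\,\Id$ as phrased is mildly circular (you invoke representatives of ``each open orbit'' before the open orbits are classified); the fix is immediate: verify the identity at a single explicit form whose orbit you show is open by a direct stabiliser-dimension computation (e.g.\ $f^{123}+f^{456}$), and then two polynomial maps agreeing on a nonempty open set agree identically — density of the union of open orbits is not needed. Second, the statements that $\psi$ has no $(2,1)+(1,2)$ component (resp.\ no mixed $\Lambda^2V_+^*\wedge V_-^*$ component) are where the normal forms actually get derived; in Hitchin's treatment this is a genuine, if short, computation with the eigenspace decomposition of $K_\psi$ (over $\C$ in the negative case, where $\psi=\alpha+\bar\alpha$ with $\alpha$ decomposable supported on the $\pm i\sqrt{|\lambdaup|}$ eigenspaces), so it should be carried out rather than asserted; similarly the duality claim $\hat\psi=J_\psi^*\psi$ needs attention to the sign ambiguity in $J_\psi$ coming from the choice of $\varepsilon(\psi)$ before reducing to the two normal-form evaluations. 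With those two checks written out, your proof is complete and matches the cited source's method.
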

For our purpose we are interested in particular pairs of stable forms, described by the following
\begin{thm}
Let $(\omega,\psi_-)\in\Lambda_0(V)\times\Lambda_-(V)$, $J_{\psi_-}$ defined by the choose of $\varepsilon(\omega)\in\Lambda^6V^*$, $h\in V^*\otimes V^*$ defined by $$h(x,y)=\omega(x,J_{\psi_-}y),\quad\forall x,y\in V,$$ and suppose  $$\omega\wedge\psi_-=0.$$\\
If $h$ is positive definite then the stabiliser of the pair $(\omega,\psi_-)$ is a subgroup of $\SO(V,h)$ isomorphic to $\SU(3)$, \emph{i.e.} the pair defines an $\SU(3)$-structure where $J_{\psi_-}$ is the complex structure, $\Psi=-J_{\psi_-}^*\psi_- + i \psi_-$ the complex volume form and $h$ the underlying hermitian metric.\\ Further any other $\SU(3)$-structure is obtained in this way.\par
Moreover, considering the real part $\psi_+$ of $\Psi$, if $(\omega,\psi_-)$ is \emph{normalized}, $$\text{i.e.}\quad\psi_+\wedge \psi_-=\frac{2}{3}\omega^3,$$ then 
\begin{align*}
&\sigma=*_h \omega,\\
&\psi_+=*_h\psi_-,
\end{align*}
and there exists a suitable ($h$-orthonormal) co-frame $\left\{f^1,\dots,f^6\right\}$ such that 
\begin{align}\label{ortoh}
&\omega=f^{12}+f^{34}+f^{56},\nonumber\\
&\psi_-=-f^{246}+f^{136}+f^{145}+f^{235}.
\end{align}
\end{thm}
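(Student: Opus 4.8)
The plan is to reduce the statement to the flat model on $\R^6\cong\C^3$. Using the two preceding theorems I would first produce a coframe adapted to the pair $(\omega,\psi_-)$, then identify the stabiliser directly in that model, and finally read off the duality identities and the normal form.

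\emph{Step 1: the pair is an $\SU(3)$-datum.} Put $J:=J_{\psi_-}$, the almost complex structure attached to $\psi_-\in\Lambda_-(V)$ by the previous theorem (for the chosen volume $\varepsilon(\omega)$). First I would check that $\omega\wedge\psi_-=0$ forces $\omega$ to be of type $(1,1)$ for $J$: since $\psi_-$ has type $(3,0)+(0,3)$ and $\dim_\C V=3$, in the decomposition $\omega=\omega^{2,0}+\omega^{1,1}+\omega^{0,2}$ the summand $\omega^{1,1}\wedge\psi_-$ vanishes for bidegree reasons, whereas wedging $\psi_-$ with $\omega^{2,0}$ or with $\omega^{0,2}$ is injective on those summands (a dimension count on $\Lambda^{2,3}V^*$ and $\Lambda^{3,2}V^*$); hence $\omega\wedge\psi_-=0$ iff $\omega^{2,0}=\omega^{0,2}=0$. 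Then $h(x,y)=\omega(x,Jy)$ is symmetric and $J$-invariant, so — being positive definite by hypothesis — a $J$-Hermitian inner product, and $\Psi=-J^*\psi_-+i\psi_-$ is a nowhere-zero $(3,0)$-form; thus $(h,J,\omega,\Psi)$ is an $\SU(3)$-datum. In an $h$-unitary coframe $\{f^1,\dots,f^6\}$ one has $\omega=f^{12}+f^{34}+f^{56}$ and $\psi_-=\im\bigl(\mu\,(f^1+if^2)\wedge(f^3+if^4)\wedge(f^5+if^6)\bigr)$ for some $\mu\in\C^*$; a diagonal $\mathbf{U}(1)$ rotation of the frame absorbs $\arg\mu$ while fixing $\omega$, so $\psi_-$ is as in \eqref{ortoh} up to the positive scalar $|\mu|$.

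\emph{Step 2: the stabiliser.} Let $g\in\GL(V)$ fix $\omega$ and $\psi_-$. Naturality of the Hitchin construction gives $K_{g^*\rho}=(\det g)\,g^{-1}K_\rho\,g$ and $\lambdaup(g^*\rho)=(\det g)^2\,\lambdaup(\rho)$, hence $J_{g^*\psi_-}=\mathrm{sgn}(\det g)\,g^{-1}Jg$; since $g^*\psi_-=\psi_-$ this reads $gJ=\mathrm{sgn}(\det g)\,Jg$. If $\det g<0$ then $gJ=-Jg$ and $h(gx,gy)=\omega(gx,Jgy)=-\omega(gx,gJy)=-\omega(x,Jy)=-h(x,y)$, impossible for a positive definite $h$; so $\det g>0$ and $g$ is $\C$-linear. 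A $\C$-linear $g$ fixing $\omega$ automatically fixes $h=\omega(\cdot,J\cdot)$, hence lies in the unitary group of $(h,J)$; a $\C$-linear $g$ fixing $\psi_-=\im\Psi$ must fix $\Psi$ (it sends $\Psi$ to a $\C$-multiple with unchanged imaginary part, and $\re\Psi,\im\Psi$ are independent), i.e.\ $\det_\C g=1$. Thus $\mathrm{Stab}(\omega,\psi_-)\subseteq\SU(3)\subset\SO(V,h)$, and the reverse inclusion is clear since $\SU(3)$ preserves $h,J,\omega,\Psi$; hence $\mathrm{Stab}(\omega,\psi_-)\cong\SU(3)$. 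Conversely, from any $\SU(3)$-datum $(h,J,\omega,\Psi)$ the pair $(\omega,\im\Psi)$ satisfies all the hypotheses (compatibility by the bidegree count, positivity of $h$ by construction, and $J_{\im\Psi}=J$ for the matching orientation) and recovers the datum, so every $\SU(3)$-structure arises this way.

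\emph{Step 3: the normalized identities, and the expected obstacle.} Under $\psi_+\wedge\psi_-=\frac{2}{3}\omega^3$, the unitary coframe of Step 1 gives $\psi_+\wedge\psi_-=|\mu|^2\,\frac{2}{3}\omega^3$, so the normalization forces $|\mu|=1$; thus $\omega$ and $\psi_-$ are \emph{exactly} the forms of \eqref{ortoh} and $\psi_+=\re\Psi$ is the associated standard $3$-form. The identities $\sigma=*_h\omega$ and $\psi_+=*_h\psi_-$ then reduce to a direct computation of the Hodge star on the monomials of \eqref{ortoh} (for the orientation fixed by $\varepsilon(\omega)$), and $\sigma=\frac{1}{2}\omega^2$ is the dual-form description of $\omega$ from the theorem on $\Lambda_0(V)$. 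The genuinely delicate point is in Step 2: carrying the orientation sign correctly through $J_{g^*\psi_-}=\mathrm{sgn}(\det g)\,g^{-1}J_{\psi_-}g$ and using positivity of $h$ to exclude the $\C$-antilinear elements — this is what pins the stabiliser down as $\SU(3)$ rather than a finite extension of it; the rest is bookkeeping in the unitary coframe, the only nuisance being the coherence of orientation and Hodge conventions across the definition of $J_{\psi_-}$, the normalization, and the normal form \eqref{ortoh}.
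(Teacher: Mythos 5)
The paper never proves this theorem: it is quoted as background on stable forms (with references to Hitchin, Reichel and Cort\'es--Leistner--Sch\"afer--Schulte-Hegesbach), so there is no internal proof to compare yours against; judged on its own, your argument is the standard one and is essentially sound. The three key points are all correct: the bidegree argument showing that $\omega\wedge\psi_-=0$ is equivalent to $\omega$ being of type $(1,1)$ for $J_{\psi_-}$ (wedging with a nonzero $(0,3)$-form is injective on $(2,0)$-forms, and the $(2,3)$ and $(3,2)$ components vanish separately); the equivariance $J_{g^*\psi_-}=\mathrm{sgn}(\det g)\,g^{-1}J_{\psi_-}\,g$ combined with positive definiteness of $h$ to exclude antilinear elements of the stabiliser; and the conclusion that the complex determinant equals $1$ from the linear independence of $\re\Psi$ and $\im\Psi$, which together give $\mathrm{Stab}(\omega,\psi_-)=\SU(3)\subset\SO(V,h)$ and the converse construction. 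The one place where you should not merely defer the "bookkeeping" is the final clause: the identities $\sigma=*_h\omega$, $\psi_+=*_h\psi_-$ and the exact normal form \eqref{ortoh} are sensitive to the orientation and sign conventions entering $J_{\psi_-}$ (the isomorphism $\theta$, the choice of positive square root of $|\lambdaup(\psi_-)|$ with respect to $\varepsilon(\omega)=\frac{1}{3!}\omega^3$, and whether $h$ is $\omega(\cdot,J\cdot)$ or $\omega(J\cdot,\cdot)$). Depending on these choices the Hitchin construction applied to the form in \eqref{ortoh} can return $-J$ instead of $J$, and the Hodge identity can come out as $*_h\psi_-=-\psi_+$; a short explicit computation on the normal form (for instance of $K_{\psi_-}e_1$ and of $*_h$ on the four monomials of $\psi_-$) is needed to fix the signs coherently, and that check is precisely the content of the "moreover" part rather than an afterthought. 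With that computation added, your proof is complete.
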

\subsection{Stable forms in seven dimensions}
Consider $n=7$.\par
Given a $3-$form $\varphi$ consider the symmetric $2$-form $b_\varphi$ on $V$ with values in $\Lambda^7V^*$, $$b_\varphi(x,y)=\iota_x\varphi\wedge\iota_y\varphi\wedge\varphi,\quad\forall x,y\in V.$$
Then the volume form map \eqref{vol} can be defined as $$\varepsilon(\varphi)=\sqrt[9]{\mathrm{det}(b_\varphi)}.$$
\begin{thm}
Let $\varphi$ be any stable $3$-form and $g_\varphi$ the symmetric $2$-form on $V$ defined by $$g_\varphi=\frac{1}{3\varepsilon(\varphi)}b_\varphi.$$
Then 
$$\Pi_+(V)=\left\{\varphi\in\Lambda^3 V^* \;|\;g_\varphi\; \text{is definite}\right\}\;\text{and}\;\Pi_-(V)=\left\{\varphi\in\Lambda^3 V^* \;|\;g_\varphi  \;\text{is indefinite}\right\},$$
are the only open orbits in $\Lambda^3 V^*$.\par
If $\varphi\in\Pi_+(V)$ we say it (and its dual form) \emph{positive}, its stabiliser is a subgroup of $\SO(V,g_\varphi)$ isomorphic to $\G$, its dual form $\phi$ equals $*_{g_\varphi}\varphi$ and in a suitable ($g_\varphi$-orthonormal) co-frame $\left\{f^1,\dots,f^7\right\}$ results 
\begin{align}\label{ortog}
&\varphi=f^{127}+f^{347}+f^{567}+f^{135}-f^{146}-f^{236}-f^{245},\nonumber\\
&\phi=f^{1234}+f^{1256}+f^{3456}+f^{2467}-f^{1367}-f^{1457}-f^{2357}.
\end{align}
\end{thm}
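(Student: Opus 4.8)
The plan is to reduce the statement to two explicit model forms together with the classical identification of their stabilisers. First I would record the invariant structure: the assignment $\varphi\mapsto b_\varphi$, and hence $\varphi\mapsto g_\varphi$, is $\GL(V)$-equivariant, so the signature of $g_\varphi$ — well defined as an unordered pair $\{(p,q),(q,p)\}$ once we discard the overall sign coming from the orientation of $\Lambda^7V^*$ — is constant along $\GL(V)$-orbits. Since $\varepsilon(\varphi)=\sqrt[9]{\det b_\varphi}$, the form $\varphi$ is stable precisely when $\det b_\varphi\neq 0$, i.e. precisely when $g_\varphi$ is nondegenerate; as a nondegenerate real symmetric form is either definite or indefinite, this already shows that $\Pi_+(V)\sqcup\Pi_-(V)$ exhausts the stable $3$-forms and that each of these two subsets, being a union of $\GL(V)$-orbits of constant signature type, is open.

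Next I would carry out the model computations. Fix a basis $\{f^i\}$ of $V^*$ and put $\varphi_0=f^{127}+f^{347}+f^{567}+f^{135}-f^{146}-f^{236}-f^{245}$. A routine evaluation of $b_{\varphi_0}(f_i,f_j)=\iota_{f_i}\varphi_0\wedge\iota_{f_j}\varphi_0\wedge\varphi_0$ shows it to be a positive multiple of $\delta_{ij}\,f^{1234567}$; hence $\varepsilon(\varphi_0)$ is a positive multiple of $f^{1234567}$ and $g_{\varphi_0}=\sum_i f^i\otimes f^i$ is positive definite, so $\{f^i\}$ is $g_{\varphi_0}$-orthonormal, and computing $*_{g_{\varphi_0}}\varphi_0$ in this co-frame returns exactly the $4$-form $\phi_0$ of \eqref{ortog}. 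Thus $\varphi_0\in\Pi_+(V)$, and its $\GL(V)$-stabiliser preserves $g_{\varphi_0}$, hence lies in $\SO(V,g_{\varphi_0})$; identifying $V$ with the imaginary octonions and $\varphi_0$ with $(x,y,z)\mapsto\langle x\times y,z\rangle$ one recognises this stabiliser as $\Aut(\mathbb{O})\cong\G$, of dimension $14$ (classical; see \cite{Sal}, \cite{Hit}). Since $\dim\Lambda^3V^*=35=49-14=\dim\GL(V)-\dim\G$, the orbit through $\varphi_0$ is open. Applying the same computation to a suitable sign change $\tilde\varphi_0$ of $\varphi_0$ produces a $g_{\tilde\varphi_0}$ of indefinite signature $(4,3)$, whose stabiliser is the split real form of the complex $\G_2$ inside $\SO(4,3)$, again of dimension $14$; so the orbit through $\tilde\varphi_0$ is open as well, and $\tilde\varphi_0\in\Pi_-(V)$ by the first step.

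Finally I would show these are the only open orbits and that each of $\Pi_\pm(V)$ is a single orbit. The first theorem of this section provides finitely many open orbits, so by upper semicontinuity the open ones are the maximal orbits; the classical classification of trivectors in seven variables (Schouten, Gurevich; see \cite{Rei}, \cite{CLSS}) shows that $\GL(7,\C)$ has a unique open orbit on $\Lambda^3(\C^7)^*$, whose real forms are exactly two — the compact and the split real forms of $\G_2$ — distinguished by the signature of $g_\varphi$. Hence a stable real $\varphi$ lies in one of exactly two open orbits, which by the signature dichotomy of the first step are $\Pi_+(V)$ and $\Pi_-(V)$. For $\varphi\in\Pi_+(V)$, choosing $a\in\GL(V)$ with $a^*\varphi_0=\varphi$ and using that $\varepsilon(\varphi)$ pins down the orientation so that $\varphi\mapsto *_{g_\varphi}\varphi$ is $\GL(V)$-equivariant, transporting the model identities by $a$ yields $\phi=*_{g_\varphi}\varphi$ and the normal forms \eqref{ortog} in the co-frame $a^*f^i$; that this $\phi$ coincides with the dual form $\hat\varphi$ of the second theorem is immediate, since both are stable, share the identity component of their stabilisers, and agree on the model. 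I expect the exhaustiveness step — ruling out stable $3$-forms whose $g_\varphi$ has signature $(6,1)$ or $(5,2)$ and checking that each admissible signature is realised by a single orbit — to be the main obstacle, and I would settle it through the complexification argument just sketched rather than a bare-hands real normal-form analysis.
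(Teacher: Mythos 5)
The paper does not prove this statement at all: it appears in the preliminary section \cref{sec:sec2} as a recalled classical fact, with the work delegated to the references (\cite{Rei}, \cite{CLSS}, \cite{Bry}, \cite{Hit}), so there is no internal proof to compare yours against. Judged on its own terms, your argument is sound and follows the standard route: equivariance of $\varphi\mapsto b_\varphi$ makes the (un)definiteness of $g_\varphi$ an orbit invariant and identifies the stable forms with $\Pi_+(V)\sqcup\Pi_-(V)$; the model computation for $\varphi_0$ gives $g_{\varphi_0}$ euclidean, stabiliser $\Aut(\mathbb{O})\cong\G$ of dimension $14$, and the count $35=49-14$ gives openness, with the split model handled the same way; equivariance of $\varphi\mapsto *_{g_\varphi}\varphi$ (orientation being carried by $\varepsilon(\varphi)$) then transports the normal forms and the identity $\phi=*_{g_\varphi}\varphi$ off the model. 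The one step you correctly flag as the crux --- that there are no further open orbits, equivalently no stable forms of signature $(6,1)$ or $(5,2)$, and that each of $\Pi_\pm(V)$ is a single orbit --- you resolve by citing the uniqueness of the open $\GL(7,\C)$-orbit on $\Lambda^3(\C^7)^*$ and its two real forms; that is legitimate, but it is exactly the content of the classical trivector classification the paper itself cites, so at that point your proof and the paper's citation are resting on the same external source rather than on an independent argument. Two small points: since $\det g_\varphi$ is a positive multiple of $\varepsilon(\varphi)^{\otimes 2}$, the possible signatures already have an even number of minus signs, so the indefinite model has signature $(3,4)$ rather than $(4,3)$ in the paper's normalisation (a convention matter only), and this observation trims but does not remove the need for the complexification step, since $(5,2)$ and $(1,6)$ survive the parity constraint; also, your openness claim for $\Pi_\pm(V)$ is fine precisely because stability is defined by openness of the orbit, so each $\Pi_\pm(V)$ is a union of open orbits --- it is worth saying that explicitly rather than attributing it to constancy of the signature.
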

\begin{rem}
Since the stabiliser of a positive $4$-form is isomorphic to $\G\times\Z_2$, and the correspondence between positive forms $\left\{\varphi\mapsto\phi\right\}$ is $2:1$, a $\G$-structure is defined by $\phi$ together with an orientation (see \cite{Bry}).
\end{rem}
\section{Cocalibrated structures}\label{sec:sec3}
In this section we classify seven dimensional nilpotent Lie algebras with a cocalibrated $\G$-structure in terms of fibrations over six dimensional nilpotent Lie algebras endowed with a particular $\SU(3)$-structure.\medskip\par
Consider an oriented seven dimensional nilpotent real Lie algebra $\gt$ with center $\zt$ and volume form $\varepsilon\in\Lambda^7\gt^*$, a non zero vector $X\in\zt$ and the following short exact sequence of Lie algebras
\begin{equation}\label{fibration}
\begin{CD}
0@>>>\R X@>>>\gt@>\pi>>\hg@>>>0.
\end{CD}
\end{equation}
\begin{rem}
There is a natural isomorphism of real algebras
\begin{equation*}
\begin{CD}
\left\{\alpha\in\Lambda^\bullet\gt^*\,|\,\iota_X\alpha=0\right\}@>\pi_*>>\Lambda^\bullet\hg^*.\\
\end{CD}
\end{equation*}
\end{rem}
\begin{prop}\label{prop1}
Let $\phi$ be a stable $4$-form on $\gt$ defining a cocalibrated $\G$-structure compatible with the orientation, $g$ the underlying metric and $\eta$ the dual form of $\frac{1}{||X||}X$ with respect to $g$.\\ Then the pair of forms $(\omega,\psi_-)\in\Lambda^2\hg^*\times\Lambda^3\hg^*$,
\begin{align*}
&\psi_-=\pi_*\left(-\frac{1}{||X||}\iota_X\phi\right),\\
&\omega\;\;\text{s.t.}\;\;\begin{cases}\frac{1}{6}\omega^3=\pi_*(\frac{1}{||X||}\iota_X\varepsilon),\\\sigma=\frac{1}{2}\omega^2=\pi_*(\phi+\frac{1}{||X||}\iota_X\phi\wedge\eta),\end{cases}
\end{align*}
 defines an $\SU(3)$-structure on $\hg$, is normalized, and satisfies\footnote{Recall notation in \cref{sec:sixdim}.} 
\begin{align}
&d(\pi^*\psi_-)=0,\nonumber\\&\label{eq1}d(\pi^*\sigma)=(\pi^*\psi_-)\wedge d(\eta),\\
&\label{eq2}g=\pi^*h+\eta\cdot\eta,\\
&\label{eq3}\phi=\pi^*\sigma+(\pi^*\psi_-)\wedge\eta.
\end{align}\par
Vice versa let $(\omega,\psi_-)\in\Lambda^2\hg^*\times\Lambda^3\hg^*$ be a pair of stable forms defining an $\SU(3)$-structure on $\hg$, compatible with the orientation $\pi_*(\iota_X\varepsilon)$ and normalized; $\eta\in\Lambda^1\gt^*$ a $1-$form for which Equation \eqref{eq1} is satisfied and the symmetric $2-$form $\pi^*h+\eta\cdot\eta$ is positive definite.\\ Then the $4-$form $\phi$ defined by \eqref{eq3} induces a cocalibrated $\G$-structure with underlying metric $g$ defined by \eqref{eq2} and volume form $\varepsilon$.
\end{prop}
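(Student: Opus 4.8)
The plan is to carry out the two implications separately, in both cases reducing the $\G$-structure identities on $\gt$ to the $\SU(3)$-structure identities on $\hg$ by means of the splitting $\gt^* = \pi^*\hg^* \oplus \R\,\eta$ dual to the exact sequence \eqref{fibration}, where $\eta$ is the $g$-dual of $\tfrac{1}{\|X\|}X$. The crucial structural observation, which I would establish first, is that every form $\alpha\in\Lambda^\bullet\gt^*$ decomposes uniquely as $\alpha = \pi^*\alpha_0 + \pi^*\alpha_1\wedge\eta$ with $\alpha_0 = \pi_*(\iota_X(\cdot)\text{-free part})$ and $\alpha_1 = \pi_*\bigl(\tfrac{1}{\|X\|}\iota_X\alpha\bigr)$, using the isomorphism of the preceding Remark. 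Applying this to $\phi$ and to $\varphi = *_g\phi$, and recalling from the seven-dimensional normal-form \eqref{ortog} that in a $g_\varphi$-orthonormal coframe with $f^7 = \eta$ one has $\iota_X\phi$ proportional to $-\psi_-$ and the $\eta$-free part of $\phi$ equal to $\sigma$, one obtains \eqref{eq3} and, after contracting the metric, \eqref{eq2} directly from the comparison of the two coframes adapted to $g_\varphi$ and to $h$ respectively. The normalization condition $\psi_+\wedge\psi_- = \tfrac23\omega^3$ and the fact that $(\omega,\psi_-)$ defines an $\SU(3)$-structure then follow by matching the volume forms via $\varepsilon = \tfrac{1}{6}(\pi^*\omega)^3\wedge\eta$ and invoking Theorem on stable pairs in six dimensions (the one characterizing $\SU(3)$-structures by positivity of $h$), once positivity of $h$ is read off from positivity of $g$ restricted to $\ker\pi$.

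For the closedness conditions, the key is that $d\phi = 0$ (the cocalibrated condition, i.e.\ the vanishing of $\tau_1$ and $\tau_2$ as recalled in the Introduction) splits into its $\pi^*\hg^*$-part and its $\pi^*\hg^*\wedge\eta$-part. Writing $d\phi = d(\pi^*\sigma) + d(\pi^*\psi_-)\wedge\eta - \pi^*\psi_-\wedge d\eta$ and noting that $d(\pi^*\sigma)$ and $d(\pi^*\psi_-)$ involve only $\pi^*$-forms of degree $\geq 1$ in the $\hg^*$-directions — because $\pi$ is a Lie algebra morphism, so $d\circ\pi^* = \pi^*\circ d_\hg$ — I would separate: the component of $d\phi$ lying in $\pi^*\Lambda^5\hg^*$ gives $d(\pi^*\psi_-)=0$, hence $\psi_-$ is closed on $\hg$; and the component in $\pi^*\Lambda^4\hg^*\wedge\eta$ gives exactly $d(\pi^*\sigma) = \pi^*\psi_-\wedge d\eta$, which is \eqref{eq1}. (One must be slightly careful: $d\eta$ itself has both a $\pi^*\Lambda^2\hg^*$ part and possibly an $\eta$-part, but the latter vanishes since $d\eta$ is a $2$-form and $\eta\wedge\eta=0$, so $d\eta = \pi^*(d\eta)_0$, and $\pi^*\psi_-\wedge d\eta$ sits in degree $5$ in the $\hg^*$-directions — again consistency forces the identity claimed.)

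For the converse, the argument runs in reverse: given a normalized $\SU(3)$-structure $(\omega,\psi_-)$ on $\hg$ and a $1$-form $\eta$ satisfying \eqref{eq1} with $\pi^*h + \eta\cdot\eta$ positive definite, I would define $\phi$ by \eqref{eq3} and first check, using the six-dimensional normal forms \eqref{ortoh} together with $f^7:=\eta$, that $\phi$ is pointwise $\GL(7,\R)$-equivalent to the model $4$-form $\phi_0$ in \eqref{ortog} — this is a purely algebraic verification at a point, comparing \eqref{ortoh} with \eqref{ortog}, and it simultaneously shows that the induced metric $g_\varphi$ is exactly $\pi^*h + \eta\cdot\eta$, hence positive definite, so $\phi\in\Pi_+$. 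Then $d\phi = d(\pi^*\sigma) + d(\pi^*\psi_-)\wedge\eta - \pi^*\psi_-\wedge d\eta$; closedness of $\psi_-$ kills the middle term and \eqref{eq1} cancels the first against the third, giving $d\phi=0$, i.e.\ the $\G$-structure is cocalibrated. The main obstacle I anticipate is bookkeeping in the algebraic identification of $\phi$ with $\phi_0$: one must choose the orthonormal coframe for $h$ on $\hg$ of \eqref{ortoh}, pull it back, adjoin $\eta$, and verify term-by-term that $\pi^*\sigma + \pi^*\psi_-\wedge\eta$ reproduces all seven monomials of $\phi_0$ in \eqref{ortog} — checking in particular that the dualization conventions ($*_h\omega=\sigma$, $*_h\psi_-=\psi_+$, $J_{\psi_-}^*\psi_- = -\psi_+$) are consistent with the sign in $\psi_- = \pi_*(-\tfrac{1}{\|X\|}\iota_X\phi)$, so that the orientation $\pi_*(\iota_X\varepsilon)$ and the global volume form $\varepsilon$ match. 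Everything else is a formal consequence of $d\pi^* = \pi^* d_\hg$ and degree counting in the splitting.
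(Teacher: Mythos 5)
Your proof follows essentially the same route as the paper's: split $\phi=\pi^*\sigma+(\pi^*\psi_-)\wedge\eta$ along the central direction and its metric dual $\eta$, identify the $\SU(3)$-data by comparing the normal forms \eqref{ortoh} and \eqref{ortog} in an adapted coframe whose seventh leg is $\eta$, and derive (resp.\ use) the differential identities by expanding $d\phi$ --- the paper phrases the forward computation via $L_X=0$ and $\iota_X d\phi=0$, which is the same bookkeeping. Two harmless slips to fix: the $\pi^*\Lambda^5\hg^*$-component of $d\phi$ is the one yielding \eqref{eq1} while the $\pi^*\Lambda^4\hg^*\wedge\eta$-component yields $d(\pi^*\psi_-)=0$ (you swapped them), and positivity of $h$ is read off from $g$ restricted to $\ker\eta=X^{\perp}$, on which $\pi$ is an isometry onto $(\hg,h)$, not from $\ker\pi=\R X$; your explicit invocation of $d\psi_-=0$ in the converse is correct and indeed needed (the paper folds it into the display containing \eqref{eq1}).
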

\begin{proof}
Define
\begin{align*}
&\psi_-=-\frac{1}{||X||}\iota_X\phi,\\
&\sigma=\phi-\psi_-\wedge\eta.
\end{align*}
It is easy to see\footnote{Compare normal forms \eqref{ortog} and \eqref{ortoh}.} that those are stable forms inducing an $\SU(3)$-structure on $X^{\perp}$\footnote{Here $X^\perp$ indicates the $g$-orthogonal complement of $\left\{X\right\}$ in $\gt$.}, and are normalized. Moreover, because of both $\psi_-$ and $\sigma$ vanish on $X$, $\hg$ inherits an $\SU(3)$-structure defined by the two stable forms $\pi_*\psi_-$ and $\pi_*\sigma$, which we will denote as $\psi_-$ and $\sigma$, and such that $\pi$ is an isometry on $X^\perp$.\par
From $L_X=0$ we deduce that forms $\psi_-$ and $\sigma$ satisfy Equation \eqref{eq1}, in fact
\begin{align*}
&d\psi_-=d\left(-\frac{1}{||X||}\iota_X\phi\right)=-\frac{1}{||X||}\iota_X(d(\phi))=0,\\&d\sigma=d(\phi-\psi_-\wedge\eta)=\psi_-\wedge d(\eta),
\end{align*}
and the metric $g$ satisfies Equation \eqref{eq2}.\par
For the converse suppose $\omega,\,\psi_-,\,\eta$ as in the statement. By definition $X^\perp$ admits an $\SU(3)$-structure defined by the pair $(\pi^*\omega,\pi^*\psi_-)$. Consider a $\pi^*h-$orthonormal frame $\left\{f_1,\dots,f_6\right\}$ of $X^\perp$ such that \eqref{ortoh} holds and put $f_7=\frac{1}{||X||}X$. Then the $4-$form $\phi$ defined by Equation \eqref{eq3} is stable and induces the metric $g$, in fact it has the normal form \eqref{ortog} with respect to the co-frame $\{f^1,\dots,f^7\}$. Furthermore Equation \eqref{eq1} implies $d(\phi)=0$.
\end{proof}
Thus we have the following obstruction to existence of cocalibrated $\G$-structures on nilpotent Lie algebras:
\begin{cor}\label{obs1}
Let $$Z^3_X=\left\{\pi_*(\iota_X\alpha)\in\Lambda^3\hg^*\,|\,\alpha\in\Lambda^4\gt^*,\;d(\alpha)=0\right\}.$$
If $$Z^3_X\cap\Lambda_-(\hg)=\emptyset$$
then $\gt$ does not admit a cocalibrated $\G$-structure.
\end{cor}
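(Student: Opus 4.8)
The plan is to prove the contrapositive, as an essentially immediate consequence of the forward direction of Proposition \ref{prop1}. Assume $\gt$ admits a cocalibrated $\G$-structure, with stable $4$-form $\phi$; then $d\phi=0$. The structure carries its own orientation, which agrees either with $\varepsilon$ or with $-\varepsilon$; since neither the set $Z^3_X$ (which does not involve a volume form at all) nor the condition of lying in $\Lambda_-(\hg)$ (defined by $\lambdaup<0$, a condition insensitive to the sign of $\varepsilon$) depends on this choice, there is no loss in assuming the structure compatible with the fixed orientation $\varepsilon$.

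Next I would apply the first half of Proposition \ref{prop1} to $\phi$ and to the fixed central vector $X$. It yields a pair $(\omega,\psi_-)\in\Lambda^2\hg^*\times\Lambda^3\hg^*$ defining an $\SU(3)$-structure on $\hg$, with
$$\psi_- = \pi_*\!\left(-\frac{1}{||X||}\iota_X\phi\right).$$
By the very definition of an $\SU(3)$-structure in six dimensions recalled in \cref{sec:sixdim} (the underlying pair lies in $\Lambda_0(\hg)\times\Lambda_-(\hg)$, equivalently $J_{\psi_-}$ is a bona fide complex structure), one has $\psi_-\in\Lambda_-(\hg)$.

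Finally I would set $\alpha=-\frac{1}{||X||}\phi\in\Lambda^4\gt^*$, so that $d\alpha=0$ and $\pi_*(\iota_X\alpha)=\psi_-$, whence $\psi_-\in Z^3_X$. Thus $\psi_-\in Z^3_X\cap\Lambda_-(\hg)$, contradicting the hypothesis that this intersection is empty; therefore $\gt$ admits no cocalibrated $\G$-structure. No step here is genuinely difficult — all of the substance is packaged into Proposition \ref{prop1} — and the only point that requires a moment's care is the orientation bookkeeping handled in the first paragraph, matching the orientation-free hypothesis of the corollary with the orientation-compatible setup of the proposition.
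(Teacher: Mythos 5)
Your argument is correct and is exactly the route the paper intends: the corollary is stated as an immediate consequence of Proposition \ref{prop1}, with $\psi_-=\pi_*\bigl(-\tfrac{1}{||X||}\iota_X\phi\bigr)$ lying in $\Lambda_-(\hg)$ because it is part of an $\SU(3)$-structure, and lying in $Z^3_X$ because $\phi$ is closed. The orientation remark is a reasonable (if optional) extra care, since neither $Z^3_X$ nor the sign condition defining $\Lambda_-(\hg)$ depends on the choice of $\varepsilon$.
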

\begin{rem}
We will see that, except for the nilpotent Lie algebras listed in Tables \ref{tab2} and \ref{tab5},
when $\gt$ does not satisfy hypothesis of Corollary \ref{obs1}, then cocalibrated $\G$-structures exist.
\end{rem}
Another result which will be useful later is the following
\begin{lem}\label{lem2}
Let $(\omega,J)$ be an $\SU(3)$-structure on a six dimensional vector space $V$ with fundamental $2$-form $\omega$, orthogonal complex structure $J$ and hermitian metric $h$.\\
Then for any $J$-invariant $4-$dimensional subspace $W$ of $V$ results $\sigma|_W\neq 0$, where $\sigma$ is the dual form of $\omega$.
\end{lem}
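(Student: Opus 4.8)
The plan is to reduce the claim to the elementary fact that the fundamental form of a positive-definite Hermitian vector space is non-degenerate, hence has non-vanishing top exterior power. By the last Theorem of \cref{sec:sixdim} the dual form of $\omega$ is $\sigma=\tfrac12\omega^2$; since restriction of forms commutes with the wedge product, $\sigma|_W=\tfrac12(\omega|_W)^2$, so it is enough to show that $\omega|_W$ is a non-degenerate $2$-form on the four-dimensional space $W$, i.e. that $(\omega|_W)^2\neq 0$ in $\Lambda^4W^*$.

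First I would observe that, since $W$ is $J$-invariant and $J^2=-\Id$, the subspace $W$ is a two-dimensional complex vector space; moreover $h|_W$ is positive definite and, as $J\in\SO(V,h)$, the restriction $J|_W$ is $h$-orthogonal. Thus $(W,h|_W,J|_W)$ is a Hermitian vector space whose associated fundamental form is exactly $\omega|_W$ (recall $h(x,y)=\omega(x,Jy)$ from the same Theorem). To make this concrete I would build an adapted basis: starting from an $h$-unit vector $e_1\in W$ one has $h(e_1,Je_1)=-h(e_1,Je_1)$, hence $\{e_1,Je_1\}$ is $h$-orthonormal; the span of $\{e_1,Je_1\}$ is $J$-invariant and, because $J$ is $h$-orthogonal, so is its $h$-orthogonal complement inside $W$; choosing an $h$-unit vector $e_2$ there yields an $h$-orthonormal basis $\{e_1,Je_1,e_2,Je_2\}$ of $W$. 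In the dual coframe $\{f^1,f^2,f^3,f^4\}$ a direct check of the pairings $\omega(e_i,Je_j)$, $\omega(e_i,e_j)$ gives $\omega|_W=\pm(f^{12}+f^{34})$, so that $\sigma|_W=\tfrac12(\omega|_W)^2=\pm f^{1234}\neq 0$.

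Every step is a short computation, so there is essentially no obstacle; the only point needing a little care is the verification that the $h$-orthogonal complement, inside $W$, of a $J$-invariant subspace is again $J$-invariant, which is immediate from $J$ being $h$-orthogonal. Alternatively one could skip the explicit basis altogether and simply invoke the standard fact that the fundamental form of a Hermitian vector space of complex dimension $n$ satisfies $\omega^n\neq 0$, applied here with $n=2$.
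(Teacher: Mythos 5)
Your argument is correct, and every step checks out against the conventions of the paper (with $h(x,y)=\omega(x,Jy)$ one gets $\omega(x,Jx)=\|x\|^2$, so your adapted coframe computation and the sign ambiguity you allow are both fine). It is, however, a somewhat different route from the one taken in the paper. You use the algebraic identity $\sigma=\tfrac12\omega^2$ (stated in the theorem on stable $2$-forms in \cref{sec:sixdim}), restrict to $W$, and conclude from the non-degeneracy of $\omega|_W$ as the fundamental form of the positive-definite Hermitian space $(W,h|_W,J|_W)$; everything happens inside $W$, and the only input is positivity of $h$ on $W$. The paper instead uses the Hodge-theoretic description $\sigma=*_h\omega$: it takes the same $J$-adapted orthonormal basis $\{x,Jx,y,Jy\}$ of $W$, completes it with a unit vector $z\in W^\perp$ to the $h$-orthonormal basis $\{x,Jx,y,Jy,z,Jz\}$ of $V$, and evaluates $\sigma(x\wedge Jx\wedge y\wedge Jy)=(*_h\omega)(x\wedge Jx\wedge y\wedge Jy)=\omega(z\wedge Jz)=\|z\|^2=1$ in one line, so positivity enters through the complementary $J$-invariant $2$-plane rather than through $W$ itself. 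Your version is slightly more self-contained (no Hodge star, no extension of the basis to all of $V$, and it isolates the general fact that the fundamental form of a Hermitian space satisfies $\omega^n\neq 0$), while the paper's version is shorter and gives the explicit value $\sigma(x\wedge Jx\wedge y\wedge Jy)=1$; both are perfectly adequate for the use made of the lemma in Propositions \ref{prop2} and \ref{prop3}. One cosmetic point: the identity $\sigma=\tfrac12\omega^2$ comes from the first theorem of \cref{sec:sixdim} (the $\Sp(6,\R)$-orbit statement), not from the last one, which instead provides $\sigma=*_h\omega$ used by the paper.
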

\begin{proof}
Let $x,y\in W$ non zero vectors such that $\left\{x,Jx,y,Jy\right\}$ is a $h|_W$-orthonormal real basis of the space $W$, and $z$ a non zero unit vector in $W^\perp$. Then $\left\{x,Jx,y,Jy,z,Jz\right\}$ is a real $h$-orthonormal basis of $V$. It follows that
\begin{eqnarray*}
\sigma(x\wedge Jx\wedge y\wedge Jy)=(*_h\omega)(x\wedge Jx\wedge y\wedge Jy)=\omega(z\wedge Jz)=||z||^2=1,
\end{eqnarray*}
 thus $\sigma|_W\neq 0$. 
\end{proof}
\section{Decomposable case}\label{sec:sec4}
In this section we classify all seven dimensional decomposable nilpotent Lie algebras which admit a cocalibrated $\G$-structure.\medskip\par
Let $\gt$ be a seven dimensional decomposable Lie algebra and $\zt$ its center. 
Recall that an $\SU(3)$-structure with fundamental $2$-form $\omega$ and complex volume form $\Psi$ is said to be \emph{Half-Flat} if and only if $d\omega=0$ and $d\re(\Psi)=0$. 
\begin{lem}\label{lem1}
Suppose $\gt$ decomposes as $\hg\oplus\R$ where $\hg$ is a six dimensional nilpotent Lie algebra admitting an Half-Flat $\SU(3)$-structure and acting trivially on the factor $\R$ by the adjoint representation. Then $\gt$ admits a cocalibrated $\G$-structure.
\end{lem}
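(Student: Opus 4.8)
The plan is to invoke the converse part of Proposition~\ref{prop1} for the fibration \eqref{fibration} attached to a generator $X$ of the $\R$-summand. Write $\gt=\hg\oplus\R X$ as Lie algebras, so that $X$ is central and $[\gt,\gt]=[\hg,\hg]\subseteq\hg$; let $\pi\colon\gt\to\hg$ be the projection, and identify $\Lambda^\bullet\hg^*$ with the forms on $\gt$ killed by $\iota_X$ (Remark after \eqref{fibration}), an identification compatible with $d$ since $X$ is central. The first — and essentially the only — place where the direct-sum hypothesis enters is this: the $1$-form $\eta\in\Lambda^1\gt^*$ determined by $\eta|_{\hg}=0$ and $\eta(X)=1$ annihilates $[\gt,\gt]$, hence $d\eta=0$.

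Next I would fix a Half-Flat $\SU(3)$-structure on $\hg$, with fundamental $2$-form $\omega$, complex volume form $\Psi$, Hermitian metric $h$, and write $\psi_\pm=\re(\Psi),\ \im(\Psi)$ and $\sigma=*_h\omega=\tfrac{1}{2}\omega^2$; recall Half-Flatness means $d\omega=0$ and $d\psi_+=0$. The crux of the lemma is the observation that this is precisely the input Proposition~\ref{prop1} wants, provided one feeds into it not $(\omega,\Psi)$ but a rotation of it. Indeed, $(\omega,e^{i\theta}\Psi)$ is, for every $\theta$, again an $\SU(3)$-structure on $\hg$ with the same $\omega$, the same metric $h$ and the same $\sigma$; taking $\theta=\pi/2$ yields the structure $(\omega,i\Psi)$, whose imaginary part is $\im(i\Psi)=\psi_+$. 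So, in the notation of Proposition~\ref{prop1}, the pair with fundamental form $\omega$ and ``$\psi_-$'' equal to $\psi_+$ defines a normalized $\SU(3)$-structure on $\hg$.

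I would then run the converse of Proposition~\ref{prop1} on this pair $(\omega,\psi_+)$ together with $\eta$, checking its hypotheses in turn. Choosing the volume form of $\gt$ to be $\varepsilon=\pm\,\pi^*(\mathrm{vol}_h)\wedge\eta$, with the sign making $\pi_*(\iota_X\varepsilon)$ the orientation of the $\SU(3)$-structure, settles orientation compatibility. Equation \eqref{eq1}, which here reads $d(\pi^*\sigma)=(\pi^*\psi_+)\wedge d\eta$, holds because both sides vanish: the right side since $d\eta=0$, and $d(\pi^*\sigma)=\pi^*\bigl(\tfrac{1}{2}d(\omega^2)\bigr)=\pi^*(\omega\wedge d\omega)=0$ by Half-Flatness. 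The symmetric form $\pi^*h+\eta\cdot\eta$ is positive definite, being block-diagonal for $\gt=\hg\oplus\R X$ — equal to $h$ on $\hg$ and to $1$ on $\R X$. Finally $d(\pi^*\psi_+)=\pi^*(d\psi_+)=0$, again by Half-Flatness, which together with \eqref{eq1} makes the $4$-form $\phi=\pi^*\sigma+(\pi^*\psi_+)\wedge\eta=\tfrac{1}{2}\omega^2+\psi_+\wedge\eta$ of \eqref{eq3} closed. Proposition~\ref{prop1} then asserts that $\phi$ is a stable positive $4$-form defining a cocalibrated $\G$-structure on $\gt$, hence an invariant one on any nilmanifold with Lie algebra $\gt$.

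I do not expect a real obstacle: the proof reduces to the bookkeeping identification above, namely that Half-Flatness $(d\omega=0,\ d\re\Psi=0)$ matches the closedness requirements of Proposition~\ref{prop1} after replacing $\Psi$ by $i\Psi$ — so that $\re\Psi$ plays the role of the imaginary part ``$\psi_-$'' of the new structure — combined with $d\eta=0$. If one prefers to bypass Proposition~\ref{prop1}, the same conclusion follows by hand: $\phi=\tfrac{1}{2}\omega^2+\psi_+\wedge\eta$ has the normal form \eqref{ortog} in an $h$-orthonormal coframe of $\hg$ completed by $\eta$, so it is a positive stable $4$-form inducing the metric $\pi^*h+\eta\cdot\eta$, and $d\phi=\omega\wedge d\omega+d\psi_+\wedge\eta-\psi_+\wedge d\eta=0$ by the three vanishings above. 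The one point to get right is exactly the rotation/sign convention separating $\psi_+$ from $\psi_-$ in \eqref{eq3}, which is what forces the use of $i\Psi$ in place of $\Psi$.
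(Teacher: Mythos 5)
Your proposal is correct and follows essentially the same route as the paper: take the closed $1$-form $\eta$ dual to the central $\R$-factor, rotate the complex volume form by $i$ so that the closed real part $\re\Psi$ plays the role of $\psi_-$, and feed the resulting normalized pair into the converse of Proposition~\ref{prop1}. Your version is, if anything, slightly more careful in spelling out the hypothesis checks (orientation, positivity of $\pi^*h+\eta\cdot\eta$, and the closedness of $\pi^*\psi_+$ needed for $d\phi=0$), which the paper leaves implicit.
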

\begin{proof}
Let $$\gt=\hg\oplus\R e_7,\quad e_7\in\zt,$$
and $(\omega,\psi_+'+i\psi_-')$ an Half-Flat $\SU(3)$-structure on $\hg$, where $\omega$ is the fundamental $2$-form and $\psi_+',\psi_-'$ the real and imaginary part of the complex volume form. Observe that also $(\omega,-\psi_-'+i\psi_+')$ defines an $\SU(3)$-structure.
Let $\eta$ be the $1-$form on $\gt$ defined by 
\begin{align*}&\eta(e_7)=1,\\&\eta(Y)=0,\;\forall Y\in\hg.
\end{align*} By hypothesis it verifies $d \eta=0$. Therefore, putting 
\begin{align*}
& \omega=\omega',\\
&\psi_-=\psi_+',
\end{align*}
the triple $(\omega,\psi_-,\eta)$ satisfies conditions of Theorem \ref{prop1}, hence defines a cocalibrated $\G$-structure. 
\end{proof}
By the notation
\begin{gather*}
\gt=(de^1,\dots,de^7)=\left(\sum_{i_1j_1}c^{1}_{i_1j_1}{i_1j_1},\dots,\sum_{i_7j_7}c^{7}_{i_7j_7}{i_7j_7}\right),\\
\end{gather*}
we mean that $\left\{e^k\right\}$ is a basis of $\gt^*$ such that $de^k=\sum_{i<j}c^k_{ij}e^{ij},$ where $c_{ij}^k\in\R$ and $e^{ij}=e^i\wedge e^j$.\\
\begin{prop}\label{prop2}
Among all seven dimensional, decomposable, nilpotent, Lie algebras those admitting a cocalibrated $\G$-structure arise from the construction described in Lemma \ref{lem1}. Explicitly $$\gt=\hg\oplus\R,$$ where $\hg$ is a six dimensional, nilpotent, Lie algebra lying in Table \ref{tab3}.
\end{prop}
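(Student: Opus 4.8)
The plan is to combine the fibration analysis from Proposition~\ref{prop1} with a case-by-case inspection of the (known, finite) list of seven dimensional decomposable nilpotent Lie algebras, splitting the argument into a necessity direction and a sufficiency direction. Sufficiency is already done: Lemma~\ref{lem1} shows that whenever $\gt=\hg\oplus\R$ with $\hg$ six dimensional nilpotent admitting a Half-Flat $\SU(3)$-structure, a cocalibrated $\G$-structure exists; so for each $\hg$ in Table~\ref{tab3} one only needs to exhibit (or cite from the literature on Half-Flat structures on six dimensional nilmanifolds) the required $\SU(3)$-structure. The real content is the converse: I must show that \emph{no other} decomposable $\gt$ admits a cocalibrated $\G$-structure.

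For the necessity direction I would first reduce to the form $\gt=\hg\oplus\R e_7$ with $e_7$ central: a decomposable seven dimensional nilpotent Lie algebra is $\hg\oplus\ag$ with $\dim\ag\geq 1$, and since $\ag$ is abelian and acts trivially (decomposable means a direct sum of ideals), any vector of $\ag$ lies in the center $\zt$. Now apply Corollary~\ref{obs1} (equivalently Proposition~\ref{prop1}) with $X=e_7$: existence of a cocalibrated $\G$-structure forces the existence of a stable form $\psi_-\in\Lambda_-(\hg)$ of the shape $\pi_*(\iota_X\alpha)$ with $\alpha\in\Lambda^4\gt^*$ closed, together with a compatible closed $\psi_-$ (since here $\iota_X\phi$ is closed because $\phi$ is closed and $L_X=0$) and a normalized $\omega$ with $d\sigma=\psi_-\wedge d\eta$. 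Because $e_7$ is central, $\eta=e^7$ has $d\eta=0$ on $\hg$-directions in the relevant sense, so in fact one is forced to have on $\hg$ a \emph{closed} stable $3$-form $\psi_-\in\Lambda_-(\hg)$ and a compatible normalized $\omega$ with $d\sigma=0$ — i.e.\ precisely a Half-Flat $\SU(3)$-structure on $\hg$ (after the standard swap $(\omega,\psi_-)\leftrightarrow$ Half-Flat data used in Lemma~\ref{lem1}). This shows the only obstruction is whether $\hg$ carries a Half-Flat $\SU(3)$-structure, and the classification of six dimensional nilpotent Lie algebras with Half-Flat structures is known (Conti--Tomassini and related work), yielding exactly Table~\ref{tab3}.

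The main obstacle is handling the genuinely multi-dimensional complement, i.e.\ $\gt=\hg'\oplus\R^2$ or $\hg''\oplus\R^3$ etc.: one must make sure the reduction to a \emph{single} central $\R$-factor does not lose any cases, and that for the five dimensional or lower nilpotent summands the tensored-up $\SU(3)$-structure question is correctly addressed (here Lemma~\ref{lem2} and the observation that an $\R^k$ summand still leaves a six dimensional nilpotent ideal $\hg$ to which the criterion applies). Concretely, if $\gt=\mathfrak{m}\oplus\R^{k}$ with $k\geq 2$ and $\mathfrak{m}$ of dimension $7-k$, then picking $\hg=\mathfrak{m}\oplus\R^{k-1}$ one is back to the codimension-one situation, and one checks whether this $\hg$ appears in the six dimensional Half-Flat list; this is a finite check against Gong's classification. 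The remaining work is bookkeeping: run through all decomposable seven dimensional nilpotent Lie algebras from \cite{Gon}, for each one identify whether it splits off an $\R$ with Half-Flat complement, and cross-reference with Table~\ref{tab3}; the algebras failing the Corollary~\ref{obs1} obstruction are exactly those whose every codimension-one nilpotent ideal through a central vector carries no closed stable negative $3$-form, and these are collected in Tables~\ref{tab1}, \ref{tab2}, \ref{tab4}, \ref{tab5} as announced in the introduction.
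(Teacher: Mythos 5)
Your sufficiency half is fine and is exactly the paper's: Lemma \ref{lem1} plus the known classification of Half-Flat $\SU(3)$-structures in six dimensions (Table \ref{tab3}). The necessity half, however, has a genuine gap. You claim that, choosing the central vector $X=e_7$ spanning the $\R$-factor, one may take $\eta=e^7$ with $d\eta=0$, so that the forms induced on $\hg$ via Proposition \ref{prop1} are closed and hence give a Half-Flat structure, reducing the whole problem to the Half-Flat classification of $\hg$. This is not justified: in the forward direction of Proposition \ref{prop1} the $1$-form $\eta$ is the $g$-dual of $X/||X||$ for the \emph{unknown} metric $g$ determined by the hypothetical cocalibrated $4$-form, and there is no reason for a complement of $X$ (in particular the ideal $\hg$) to be $g$-orthogonal to $X$; since $d\eta(Y,Z)=-\eta([Y,Z])$ and $[\gt,\gt]$ need not be $g$-orthogonal to $X$, one cannot assume $d\eta=0$. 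What Proposition \ref{prop1} gives is only $d\psi_-=0$ together with $d\sigma=\psi_-\wedge d\eta$, not $d\sigma=0$; indeed many of the explicit structures in Table \ref{tab6} have $\eta$ with nonzero differential. So "cocalibrated on $\hg\oplus\R$ $\Rightarrow$ Half-Flat on $\hg$" is precisely the nontrivial content you would need to prove, not something you may read off; the paper never proves such an equivalence and instead argues case by case.

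Concretely, your argument has no mechanism to exclude the three decomposable algebras of Table \ref{tab2}, e.g. $(0,0,0,12,13-24,14+23,0)$: for these the obstruction of Corollary \ref{obs1} fails for every central $X$ (closed $4$-forms with stable negative contraction do exist), yet no cocalibrated structure exists. The paper kills them with a finer argument: $d\psi_-=0$ forces the subspace $W=\mathrm{Span}(\pi(e_3),\dots,\pi(e_6))$ to be $J_{\psi_-}$-invariant, while $d\sigma=\psi_-\wedge d\eta$ forces $\sigma|_W=0$, contradicting Lemma \ref{lem2}. Your closing sentence even misstates the situation by describing Tables \ref{tab2} and \ref{tab5} as algebras "failing the Corollary \ref{obs1} obstruction" --- their captions assert the opposite, which is exactly why the extra Lemma \ref{lem2} argument is needed. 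A secondary inaccuracy: "decomposable" does not mean one summand is abelian (the sum of the $4$-dimensional filiform algebra and the $3$-dimensional Heisenberg algebra is decomposable with center contained in the derived algebra, hence has no $\R$-summand), so your reduction to a single central $\R$-factor does not literally cover all decomposable algebras and the bookkeeping step needs to be restated in terms of an arbitrary central $X$ as in the paper.
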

\begin{center}
\begin{longtable}{|l|}
\caption{Six dimensional nilpotent Lie algebras admitting Half-Flat $\SU(3)$-structures.}\label{tab3}\\
\hline
$\hg$\\
\endfirsthead
\multicolumn{1}{l}{\textit{Continues from the previous page}} \\\hline
\endhead
\hline
\multicolumn{1}{r}{\textit{Continues to the next page}}
\endfoot\hline
\endlastfoot\hline
$(0,0,0,0,0,0)$\\\hline
$2$-step nilpotent Lie algebras\\\hline
$(0,0,0,12,13,23)$\\
$(0,0,0,0,13-24,14+23)$\\
$(0,0,0,0,12,14+23)$\\
$(0,0,0,0,12,13)$\\
$(0,0,0,0,12,34)$\\
$(0,0,0,0,0,12+34)$\\
$(0,0,0,0,0,12)$\\\hline
$3$-step nilpotent Lie algebras\\\hline
$(0,0,0,0,12,15+34)$\\
$(0,0,0,0,12,14+25)$\\
$(0,0,0,12,23,14+35)$\\
$(0,0,0,12,23,14-35)$\\
$(0,0,0,12,13,14+35)$\\
$(0,0,0,12,13,14+23)$\\
$(0,0,0,12,13,24)$\\\hline
$4$-step nilpotent Lie algebras\\\hline
$(0,0,12,13,23,14)$\\
$(0,0,12,13,23,14+25)$\\
$(0,0,12,13,23,14-25)$\\
$(0,0,0,12,14,15+23)$\\
$(0,0,0,12,14-23,15+34)$\\
$(0,0,0,12,14,15)$\\
$(0,0,0,12,14,15+24)$\\
$(0,0,0,12,14,15+23+24)$\\\hline
$5$-step nilpotent Lie algebras\\\hline
$(0,0,12,13,14+23,24+15)$\\
\end{longtable}
\end{center}

\begin{proof}
Let $\gt$ be any decomposable seven dimensional nilpotent real Lie algebra.\par
If $\gt$ lies in Table \ref{tab1} then there exists a non zero vector $X\in\zt$ such that $$Z^3_X\cap\Lambda_-(\hg)=\emptyset,$$ hence, by Corollary \ref{obs1}, it does not admit a cocalibrated $\G$-structure.
\begin{center}
\begin{longtable}{|l|l|}\caption{Decomposable nilpotent Lie algebras satisfying $Z^3_X\cap\Lambda_-(\hg)=\emptyset$.}\label{tab1}\\
\hline
$\gt$&$X$\\\hline
\endfirsthead
\multicolumn{2}{l}{\textit{Continues from the previous page}} \\\hline
\endhead
\hline
\multicolumn{2}{r}{\textit{Continues to the next page}}
\endfoot\hline
\endlastfoot
\multicolumn{2}{|l|}{$3$-step nilpotent Lie algebras}\\\hline
$(0,0,0,0,23,34,36)$&$e_7$\\
$(0,0,0,0,12,15,0)$&$e_6$\\
$(0,0,0,12,13,14,0)$&$e_5$\\
$(0,0,0,12,14,24,0)$&$e_7$\\\hline
\multicolumn{2}{|l|}{$5$-step nilpotent Lie algebras}\\\hline
$(0,0,12,13,14,23+15,0)$&$e_6$\\$(0,0,12,13,14,15,0)$&$e_6$\\$(0,0,12,13,14,34-25,0)$&$e_7$\\
$(0,0,12,13,14+23,34-25,0)$&$e_7$\\
\end{longtable}
\end{center}\par
If $\gt$ appears in Table \ref{tab2}, let $\phi$ be any closed $4$-form. Put $X=e_7$, $\eta$ any $1-$form satisfying $\eta(X)>0$, and define, as in Proposition \ref{prop1}, forms $\sigma$ and $\psi_-$ on $\hg$.\\
By way of contradiction suppose $\phi$ stable. Then there exists a solution of Equations \eqref{eq1} and \eqref{eq2}. Now, for any volume form on $\hg$, the equation $$d(\psi_-)=0,$$ forces the subspace $W=\mathrm{Span}\left(\pi(e_3),\pi(e_4),\pi(e_5),\pi(e_6)\right)\subset\hg$ to be $J_{\psi_-}$-invariant. But one can check\footnote{Explicit calculations can be found in \cref{sec:app}.} that the equation $$d(\sigma)=\psi_-\wedge d(\eta)$$ implies $\sigma|_W=0$, contradicting Lemma \ref{lem2}.
\begin{center}
\begin{longtable}{|l|}
\caption{Decomposable nilpotent Lie algebras satisfying  $Z^3_X\cap\Lambda_-(\hg)\neq\emptyset$ for all $X\in\zt$ but admitting no cocalibrated $\G$-structures.}\label{tab2}\\\hline
$\gt$\\\hline
$3$-step nilpotent Lie algebras\\\hline
$(0,0,0,12,13-24,14+23,0)$\\$(0,0,0,12,14,13-24,0)$\\$(0,0,0,12,13+14,24,0)$\\\hline
\end{longtable}
\end{center}\par
Any other $\gt$ is the central extension of a six dimensional nilpotent Lie algebra $\hg$ appearing in Table \ref{tab3}. Such $\hg$ has an Half-Flat $\SU(3)$-structure (see \cite{Con}), hence, by Lemma \ref{lem1}, $\gt$ admits a cocalibrated $\G$-structure.
\end{proof}  
\section{Indecomposable case}\label{sec:sec5}
In this section we classify all seven dimensional indecomposable nilpotent Lie algebras which admit a cocalibrated $\G$-structure.
\medskip\par
Let $\gt$ be a seven dimensional decomposable Lie algebra and $\zt$ its center.
\begin{prop}\label{prop3}
Among all seven dimensional, indecomposable, nilpotent, Lie algebras those admitting a cocalibrated $\G$-structure are listed in tables \ref{tab2step} - \ref{tab5step}.
\end{prop}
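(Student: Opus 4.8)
The plan is to run, over Gong's classification \cite{Gon} of the indecomposable seven-dimensional nilpotent Lie algebras, the same trichotomy used in the proof of Proposition \ref{prop2}. For each such $\gt$ I would first scan the center $\zt$ for a vector $X$ realising the obstruction of Corollary \ref{obs1}, i.e.\ with $Z^3_X\cap\Lambda_-(\hg)=\emptyset$: computing $Z^3_X$ is the linear-algebra problem of pushing the space of closed $4$-forms forward under $\pi_*\circ\iota_X$, and intersecting with $\Lambda_-(\hg)$ only adds the single open condition $\lambdaup<0$, so this is a finite and completely explicit test. Every $\gt$ that passes it admits no cocalibrated $\G$-structure and is recorded in Table \ref{tab4}.

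For the algebras that survive Corollary \ref{obs1} but still carry no cocalibrated structure — collected in Table \ref{tab5} — I would reproduce the refinement used for Table \ref{tab2}: fix a central $X=e_k$, assume a stable positive $4$-form $\phi$ inducing a cocalibrated $\G$-structure exists, and pass, via Proposition \ref{prop1}, to the induced normalized $\SU(3)$-structure $(\omega,\psi_-)$ on $\hg$ together with the $1$-form $\eta$. For a suitable $X$ the equation $d\psi_-=0$ forces a coordinate subspace $W\subset\hg$ — spanned by the images under $\pi$ of the generators not occurring in the relevant structure equations — to be $J_{\psi_-}$-invariant, whereas $d(\pi^*\sigma)=(\pi^*\psi_-)\wedge d\eta$ forces $\sigma|_W=0$; this contradicts Lemma \ref{lem2}. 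Some algebras may require a different choice of $X$ or a different forced invariant subspace, but the mechanism is the same.

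For every remaining $\gt$ I would exhibit an explicit cocalibrated $\G$-structure, again through Proposition \ref{prop1}, but now the fibration $\pi\colon\gt\to\hg$ no longer splits off an $\R$-factor, so $\eta$ need not be closed and Equation \eqref{eq1} becomes a genuine linear constraint coupling $\eta$ to $(\omega,\psi_-)$. Concretely I would choose $X\in\zt$ so that the six-dimensional nilpotent quotient $\hg$ carries a normalized $\SU(3)$-structure with $\psi_-$ closed, then solve $d(\pi^*\sigma)=(\pi^*\psi_-)\wedge d\eta$ for a $1$-form $\eta$ on $\gt$ with $\pi^*h+\eta\cdot\eta$ positive definite, and read off $\phi$ from \eqref{eq3}; stability and positivity of $\phi$ are then checked by exhibiting the normal form \eqref{ortog} in the resulting coframe. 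When no convenient fibration presents itself one can instead write down a closed stable positive $4$-form directly in the structure-constant basis. Carrying this out algebra by algebra, and recording each $\phi$, produces Tables \ref{tab2step}--\ref{tab5step}.

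The main obstacle is the size and heterogeneity of the case analysis: Gong's list of indecomposable seven-dimensional nilpotent Lie algebras is long, the quotients $\hg$ differ both in nilpotency step and in which $\SU(3)$-type structures they admit, and for the borderline non-existence cases Corollary \ref{obs1} alone is insufficient, so each such algebra needs its own choice of $X$ and its own forced $J_{\psi_-}$-invariant subspace. Organising the enumeration so that the three mechanisms — Corollary \ref{obs1}, the Lemma \ref{lem2} argument, and explicit construction via Proposition \ref{prop1} — jointly exhaust the list, together with the numerous explicit stability and positivity verifications (deferred to Appendix \cref{sec:app}), is where the real work lies.
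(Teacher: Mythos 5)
Your proposal takes essentially the same route as the paper's proof: the identical trichotomy of Corollary \ref{obs1} for the algebras of Table \ref{tab4}, the Lemma \ref{lem2} contradiction (the paper always uses $X=e_7$ and $W=\mathrm{Span}\left(\pi(e_3),\pi(e_4),\pi(e_5),\pi(e_6)\right)$, as in the Table \ref{tab2} cases) for Table \ref{tab5}, and explicit triples $(\omega,\psi_-,\eta)$ satisfying Proposition \ref{prop1} for all remaining algebras. What your sketch leaves implicit is exactly what the paper supplies as data rather than argument: the explicit entries of Tables \ref{tab4}--\ref{tab6} and the computations in Appendix \cref{sec:app}.
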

\begin{longtable}{|l|l|}\caption{Indecomposable $2$-step nilpotent Lie algebras admitting cocalibrated $\G$-structures.}\label{tab2step}\\\hline
$\gt$&$(d e^1,\dots, d e^7)$\\\hline
\endfirsthead
\multicolumn{2}{l}{\textit{Continues from the previous page}} \\\hline
\endhead
\hline
\multicolumn{2}{r}{\textit{Continues to the next page}}
\endfoot\hline
\endlastfoot
$37A$ &$(0,0,0,0,12,23,24)$\\
$37B$ &$(0,0,0,0,12,23,34)$\\
$37C$ &$(0,0,0,0,12+34,23,24)$\\
$37D$ &$(0,0,0,0,12+34,13,24)$\\
$17$ &$(0,0,0,0,0,0,12+34+56)$\\
$37B_1$ &$(0,0,0,0,12-34,13+24,14)$\\
$37D_1$ &$(0,0,0,0,12-34,13+24,14-23)$\\
\end{longtable}
\begin{longtable}{|l|l|}\caption{Indecomposable $3$-step nilpotent Lie algebras admitting cocalibrated $\G$-structures.}\label{tab3step}\\\hline
$\gt$&$(d e^1,\dots, d e^7)$\\
\endfirsthead
\multicolumn{2}{l}{\textit{Continues from the previous page}} \\\hline
\endhead
\hline
\multicolumn{2}{r}{\textit{Continues to the next page}}
\endfoot\hline
\endlastfoot
\hline
$357A$ &$(0,0,12,0,13,24,14)$\\
$257A$  &$(0,0,12,0,0,13+24,15)$\\
$257C$ &$(0,0,12,0,0,13+24,25)$\\
$257I$ &$(0,0,12,0,0,13+14,15+23)$\\
$257J$  &$(0,0,12,0,0,13+24,15+23)$\\
$247A$ &$(0,0,0,12,13,14,15)$\\
$247B$ &$(0,0,0,12,13,14,35)$\\
$247C$ &$(0,0,0,12,13,14+35,15)$\\
$247D$ &$(0,0,0,12,13,14,25+34)$\\
$247F$ &$(0,0,0,12,13,24+35,25+34)$\\
$247I$ &$(0,0,0,12,13,25+34,35)$\\
$247J$ &$(0,0,0,12,13,15+35,25+34)$\\
$247L$ &$(0,0,0,12,13,14+23,15)$\\
$247M$  &$(0,0,0,12,13,14+23,35)$\\
$247N$ &$(0,0,0,12,13,15+24,23)$\\
$247O$ &$(0,0,0,12,13,14+35,15+23)$\\
$247P$ &$(0,0,0,12,13,23,25+34)$\\
$247Q$ &$(0,0,0,12,13,,14+23,25+34)$\\
$157$ &$(0,0,12,0,0,0,13+24+56)$\\
$147A$ &$(0,0,0,12,13,0,16+25+34)$\\
$147B$ &$(0,0,0,12,13,0,14+26+35)$\\
$147D$ &$(0,0,0,12,23,-13,15+26+16-2*34)$\\
$137A$ &$(0,0,0,0,12,34,15+36)$\\
$137B$ &$(0,0,0,0,12,34,15+36+24)$\\
$137C$ &$(0,0,0,0,12,14+23,16-35)$\\
$257J_1$  &$(0,0,12,0,0,13+14+25,15+23)$\\
$247F_1$ &$(0,0,0,12,13,24-35,25+34)$\\
$247P_1$ &$(0,0,0,12,13,23,24+35)$\\
$147A_1$  &$(0,0,0,12,13,0,16+24+35)$\\
$137A_1$ &$(0,0,0,0,13+24,14-23,15+26)$\\
$137B_1$ &$(0,0,0,0,13+24,14-23,15+26+24)$\\
$147E(\muup\neq 0,1)$&$(0,0,0,,12,23,-13,\muup 26-15-(-1+\muup)34)$\\
$147E_1(\muup>1)$&$(0,0,0,12,23,-13,2[26-34-\frac{\muup}{2}16+\frac{\muup}{2}25])$\\
\end{longtable}
\begin{longtable}{|l|l|}\caption{Indecomposable $4$-step nilpotent Lie algebras admitting cocalibrated $\G$-structures.}\label{tab4step}\\\hline
$\gt$&$(d e^1,\dots, d e^7)$\\
\endfirsthead
\multicolumn{2}{l}{\textit{Continues from the previous page}} \\\hline
\endhead
\hline
\multicolumn{2}{r}{\textit{Continues to the next page}}
\endfoot\hline
\endlastfoot
\hline
$2457A$ &$(0,0,12,13,0,14,15)$\\
$2457B$  &$(0,0,12,13,0,25,14)$\\
$2457C$ &$(0,0,12,13,0,14+25,15)$\\
$2457D$ &$(0,0,12,13,0,14+23+25,15)$\\
$2457E$  &$(0,0,12,13,0,23+25,14)$\\
$2457F$ &$(0,0,12,13,0,14+23,15)$\\
$2457G$ &$(0,0,12,13,0,15+23,14)$\\
$2457H$ &$(0,0,12,13,0,23,14+25)$\\
$2457I$ &$(0,0,12,13,0,14+23,25)$\\
$2457J$ &$(0,0,12,13,0,14+23,23+25)$\\
$2457K$ &$(0,0,12,13,0,15+23,14+25)$\\
$2457L$ &$(0,0,12,13,23,14+25,15+24)$\\
$2457M$ &$(0,0,12,13,23,24+15,14)$\\
$2357A$  &$(0,0,0,12,14+23,23,15-34)$\\
$2357B$ &$(0,0,0,12,14+23,13,15-34)$\\
$2357C$ &$(0,0,0,12,14+23,24,15-34)$\\
$2357D$ &$(0,0,12,14+23,13+24,15-34)$\\
$1357A$ &$(0,0,0,12,14+23,0,15+26-34)$\\
$1357B$ &$(0,0,0,12,14+23,0,15+36-34)$\\
$1357C$ &$(0,0,0,12,14+23,0,15+24+36-34)$\\
$1357D$ &$(0,0,12,0,23,24,16+25+34)$\\
$1357F$ &$(0,0,12,0,23,24,13+25-46)$\\
$1357G$ &$(0,0,12,0,23,14,16+25)$\\
$1357H$ &$(0,0,12,0,23,14,16+25+26-34)$\\
$1357I$ &$(0,0,12,0,23,14,25+46)$\\
$1357J$  &$(0,0,12,0,23,14,13+25+46)$\\
$1357L$ &$(0,0,12,0,13+24,23,16+25)$\\
$1357O$ &$(0,0,12,0,13+24,23,15+26+34)$\\
$1357P$  &$(0,0,12,0,13+24,23,15+26+34)$\\
$1357Q$ &$(0,0,12,0,13,23+24,15+26)$\\
$1357R$ &$(0,0,12,0,13,23+24,16+25+34)$\\
$2457L_1$ &$(0,0,12,13,23,14-25,15+24)$\\
$2357D_1$  &$(0,0,0,12,14+23,13-24,15-34)$\\
$1357F_1$ &$(0,0,12,0,23,24,13+25+46)$\\
$1357P_1$ &$(0,0,12,0,13+24,23,15+34-26)$\\
$1357Q_1$  &$(0,0,12,0,13,23+24,15-26)$\\
$1357M(\muup\neq 0,-1)$&$0,0,12,0,24+13,14,-(-1+\muup)34+15+\muup 26)$\\
$1357N(\muup\neq -2)$&$(0,0,12,0,13+24,14,46+34+15+\muup 23)$\\
$1357S(\muup\neq 1)$&$(0,0,12,13,24+23,25+34+16+15+\muup 26)$\\
$1357QRS_1(\muup\neq 0)$&$(0,0,12,0,13+24,14-23,\muup 26+15-(-1+\muup)34)$\\
\end{longtable}
\begin{longtable}{|l|l|}\caption{Indecomposable $5$-step nilpotent Lie algebras admitting cocalibrated $\G$-structures.}\label{tab5step}\\\hline
$\gt$&$(d e^1,\dots, d e^7)$\\
\endfirsthead
\multicolumn{2}{l}{\textit{Continues from the previous page}} \\\hline
\endhead
\hline
\multicolumn{2}{r}{\textit{Continues to the next page}}
\endfoot\hline
\endlastfoot
\hline
$23457C$ &$(0,0,12,13,14,15,25-34)$\\
$23457D$  &$(0,0,12,13,14,15+23,25-34)$\\
$23457E$ &$(0,0,12,13,14+23,15+24,23)$\\
$23457G$ &$(0,0,12,13,14+23,15+24,25-34)$\\
$13457D$  &$(0,0,12,13,14+23,0,15+24+26)$\\
$13457F$ &$(0,0,12,13,14,23,15+26)$\\
$12457A$ &$(0,0,12,13,0,14+25,16+35)$\\
$12457B$ &$(0,0,12,13,0,14+25,16+25+35)$\\
$12457C$ &$(0,0,12,13,0,14+25,26-34)$\\
$12457D$ &$(0,0,12,13,0,14+25,15+26-34)$\\
$12457E$ &$(0,0,12,13,0,14+23+25,16+24+35)$\\
$12457F$ &$(0,0,12,13,0,14+23+25,26-34)$\\
$12457G$ &$(0,0,12,13,0,14+23+25,15+26-34)$\\
$12457H$  &$(0,0,12,13,23,15+24,16+34)$\\
$12457I$ &$(0,0,12,13,23,15+24,16+34)$\\
$12457J$ &$(0,0,12,13,23,15+24,16+14+25+34)$\\
$12457K$ &$(0,0,12,13,23,15+24,16+14+34)$\\
$12457L$ &$(0,0,12,13,23,15+24,16+26+34-35)$\\
$12357A$ &$(0,0,0,12,14+23,15-34,16-35)$\\
$12357B$ &$(0,0,0,12,14+23,15-34,16+23-35)$\\
$12357C$ &$(0,0,0,12,14+23,15-34,16+24-35)$\\
$12457J_1$ &$(0,0,12,13,23,24+15,16+14-25+34)$\\
$12457L_1$ &$(0,0,12,13,23,-14-25,16-35)$\\
$12457N_1$ &$(0,0,12,13,23,-14-25,16-35+25)$\\
$12357B_1$ &$(0,0,0,12,14+23,15-34,16-23-35)$\\
$12457N(\muup\in\R)$&$(0,0,12,13,23,24+15,\muup 25 +26+34-35+16+14)$\\
$123457I(\muup\in\R)$&$(0,0,12,13,14+23,\muup 25+26+34-35+16+14)$\\
$12457N_2(\muup\geq 0)$&$0,0,12,13,23,-14-25,15+16+24-35+\muup 25)$\\
\end{longtable}
\begin{proof}
Let $\gt$ be any indecomposable seven dimensional nilpotent real Lie algebra.\par
If $\gt$ appears in Table \ref{tab4} then there exists a non zero vector $X\in\zt$ satisfying $$Z^3_X\cap\Lambda_-(\hg)=\emptyset,$$ hence, by Corollary \ref{obs1}, admits no cocalibrated $\G$-structures.
\begin{center}
\begin{longtable}{|l|l|l|}\caption{Indecomposable nilpotent Lie algebras satisfying $Z^3_X\cap\Lambda_-(\hg)=\emptyset$.}\label{tab4}\\
\endfirsthead
\multicolumn{2}{l}{\textit{Continues from the previous page}} \\\hline
\endhead
\hline
\multicolumn{2}{r}{\textit{Continues to the next page}}
\endfoot\hline
\endlastfoot
\hline
$\gt$&$(d e^1,\dots,d e^7)$&$X$\\\hline
\multicolumn{3}{|l|}{$2$-step nilpotent Lie algebras}\\\hline
$27A$&$(0,0,0,0,0,12,14+35)$&$e_6$\\
$27B$&$(0,0,0,0,0,12+34,15+23)$&$e_6$\\\hline
\multicolumn{3}{|l|}{$3$-step nilpotent Lie algebras}\\\hline
$257B$&$(0,0,12,0,0,13,14+25)$&$e_7$\\
$257D$&$(0,0,12,0,0,13+24,14+25)$&$e_7$\\
$257E$&$(0,0,12,0,0,13+45,24)$&$e_7$\\
$257G$&$(0,0,12,0,0,13+45,15+24)$&$e_7$\\
$257H$&$(0,0,12,0,0,13+24,45)$&$e_7$\\
$257K$&$(0,0,12,0,0,13,23+45)$&$e_7$\\
$257L$&$(0,0,12,0,0,13+24,23+45)$&$e_7$\\
$247E$&$(0,0,0,12,13,14+15,25+34)$&$e_7$\\
$247G$&$(0,0,0,12,13,14+15+24+35,25+34)$&$e_7$\\
$247H$&$(0,0,0,12,13,14+24+35,25+34)$&$e_7$\\
$247K$&$(0,0,0,12,13,14+35,25+34)$&$e_7$\\
$247R$&$(0,0,0,12,13,14+15+23,25+34)$&$e_7$\\
$247E_1$&$(0,0,0,12,13,14,24+35)$&$e_6$\\
$247H_1$&$(0,0,0,12,13,14+24-35,25+34)$&$e_7$\\\hline
\multicolumn{3}{|l|}{$4$-step nilpotent Lie algebras}\\\hline
$1457A$&$(0,0,12,13,0,0,14+56)$&$e_7$\\
$1457B$&$(0,0,12,13,0,0,23+14+56)$&$e_7$\\
$1457E$&$(0,0,12,0,23,24,25+46)$&$e_7$\\
$1357M(\muup=-1)$&$(0,0,12,0,24+13,14,-(-1+\muup)34+15+\muup 26),\;\muup=-1$&$e_7$\\
$1357N(\muup=-2)$&$(0,0,12,0,24+13,14,46+34+15+\muup 23),\;\muup=-2$&$e_7$\\\hline
\multicolumn{3}{|l|}{$5$-step nilpotent Lie algebras}\\\hline
$23457B$&$(0,0,12,13,14,25-34,23)$&$e_6$\\
$23457F$&$(0,0,12,13,14+23,25-34,23)$&$e_7$\\
$13457A$&$(0,0,12,13,14,0,15+26)$&$e_7$\\
$13457B$&$(0,0,12,13,14,0,15+23+26)$&$e_7$\\
$13457C$&$(0,0,12,13,14,0,16+25-34)$&$e_7$\\
$13457E$&$(0,0,12,13,14+23,0,16+25-34)$&$e_7$\\
$13457G$&$(0,0,12,13,14,23,16+24+25-34)$&$e_7$\\
$13457I$&$(0,0,12,13,14,23,15+25+26-34)$&$e_7$\\
\end{longtable}
\end{center}\par
If $\gt$ appears in Table \ref{tab5}, as we have seen in the proof of Proposition \ref{prop2} for Lie algebras in Table \ref{tab2}, putting $X=e_7$, $W=\mathrm{Span}\left(\pi(e_3),\pi(e_4),\pi(e_5),\pi(e_6)\right)\subset\hg$ and proceeding by contradiction one can prove that it admits no cocalibrated $\G$-structures.
\begin{center}
\begin{longtable}{|l|}\caption{Indecomposable nilpotent Lie algebras satisfying  $Z^3_X\cap\Lambda_-(\hg)\neq\emptyset$ for all $X\in\zt$ but admitting no cocalibrated $\G$-structures.}\label{tab5}\\\hline
$\gt$\\\hline
$3$-step nilpotent Lie algebras \\\hline
$(0,0,12,0,13,23,14)$\\$(0,0,12,0,13+24,23,14)$\\
\hline
$5$-step nilpotent Lie algebra \\\hline$(0,0,12,13,14,15,23)$\\\hline
\end{longtable}
\end{center}\par
For any other Lie algebra $\gt$ we can choose a non zero vector $X\in\zt$ and define a triple of forms $(\omega,\psi_-,\eta)$ (see Table \ref{tab6}) which satisfies hypothesis of Proposition \ref{prop1}, and consequently defines a cocalibrated $\G$-structure. Those algebras are listed in tables \ref{tab2step} - \ref{tab5step}.
\end{proof}
\section{Concluding remarks}
We have thus classified all seven dimensional nilpotent Lie algebras admitting cocalibrated $\G$-structures. We obtained several such structures according to the fact that, on a closed manifold, if there exists a $\G-$structure then there exists also a cocalibrated one, \cite{CN}. Our classification concerns the invariant case which turns out to be less sporadic, although rich enough to contain many samples.\\
Now, following \cite{FFM}, we are interested in studying nilsoliton metrics induced by cocalibrated structures, in order to find more restrictive existence conditions.     
Moreover we hope to analyse the behaviour of certain flows of $\G$-structures in such context: in particular the modified laplacian co-flow introduced in \cite{Gr}, for which short-time existence is proved.   
\section*{Acknowledgements}
I would like to thank Diego Conti, Marisa Fern\'andez and Anna Fino for useful improvements and punctual comments. Moreover I am very grateful to my advisor, Prof. Anna Fino, for her interest in my work.\\
This work was partially supported by the research group GNSAGA of the Istituto Nazionale di Alta Matematica Italiana ( INdAM ).
\section{Appendix}\label{sec:app}
In this appendix we perform some explicit calculations omitted from Propositions \ref{prop2} and \ref{prop3}.\medskip\par
Let $\gt$ be a nilpotent seven dimensional real Lie algebra, $\zt$ its center, $0\neq X\in\zt$ and $\hg$ defined by \eqref{fibration}.\\
Fix a basis $\left\{e_1,\dots,e_7\right\}$ of $\gt$ and consider an arbitrary closed form $\phi\in\Lambda^4\gt^*$,
\begin{equation*}
\phi=\sum_{i<j<k<l} \phi_{ijkl}e^{ijkl},\quad \phi_{ijkl}\in\R.
\end{equation*}\medskip
\subsection*{$\gt=(0,0,0,0,12,15,0)$}
Let $X=e_6$, $\hg=(df^1,\dots,df^6)=(0,0,0,0,12,0)$, where 
\begin{align*}
&f_{i}=\pi(e_i),\quad i<6,\\
&f_6=\pi(e_7).
\end{align*}
and consider
$$\psi_-=\pi_*(-\iota_{e_6}\phi)=\sum_{i<j<k}\phi_{ij67}f^{ij6}-\phi_{ijk6}f^{ijk}.$$\par
It turns out that
\begin{equation*}
d(\phi)=0 \quad\Longleftrightarrow\quad \begin{cases}
\phi_{2346}=0,\\
\phi_{2367}=0,\\
\phi_{2467}=0,\\
\phi_{3456}=0,\\
\phi_{3457}=0,\\
\phi_{3467}=0,\\
\phi_{3567}=0,\\
\phi_{4567}=0.\\
\end{cases}
\end{equation*}
Then it results
$$\lambdaup(\psi_-)=(\phi_{1346}\phi_{2567}+\phi_{1367}\phi_{2467}-\phi_{1467}\phi_{2356})^2 f^{123456}\otimes f^{123456},$$
hence hypothesis of Corollary \ref{obs1} are satisfied. It follows that $\gt$ admits no cocalibrated $\G$-structures.\medskip
\subsection*{$\gt=(0,0,0,12,13-24,23+14,0)$}
Let $X=e_7$ and $\hg=(df^1,\dots,df^6)=(0,0,0,12,13-24,23+14)$, where 
\begin{align*}
&f_{i}=\pi(e_i),\quad i<7.
\end{align*}\par
It turns out that
\begin{equation*}
d(\phi)=0 \quad\Longleftrightarrow\quad \begin{cases}
\phi_{1567}=0,\\
\phi_{2356}=-\phi_{1456},\\
\phi_{2456}=\phi_{1356},\\
\phi_{2457}=\phi_{2367}+\phi_{1467}+\phi_{1357},\\
\phi_{2567}=0,\\
\phi_{3456}=0,\\
\phi_{3457}=0,\\
\phi_{3467}=0,\\
\phi_{3567}=0,\\
\phi_{4567}=0.
\end{cases}
\end{equation*}
By way of contradiction suppose $\phi$ to be stable and consider
\begin{align*}
&\psi_-=\pi_*(-\iota_{e_7}\phi)=\sum_{i<j<k}\phi_{ijk7}f^{ijk},\\
&\eta=\sum_i c_ie^i,\quad c_i\in\R,
\end{align*}
where $\eta$ is the dual form of $e_7$ with respect the metric induced by $\phi$.\\
Then $\psi_-$ lies in $\Lambda_-(\hg)$ and for any volume form $\varepsilon\in\Lambda^6\hg^*$ results
$$J_{\psi_-}=\left(\begin{matrix}
*&*&0&0&0&0\\
*&*&0&0&0&0\\
*&*&*&*&0&0\\
*&*&*&*&0&0\\
*&*&*&*&*&*\\
*&*&*&*&*&*\\
\end{matrix}\right)$$
with respect to the basis $\left\{f_1,\dots,f_6\right\}$, hence the $4$-dimensional subspace $W=\mathrm{Span}(f_3,\dots, f_6)$ is $J_{\psi_-}$-invariant.\par
By assumption the $4-$form $$\sigma=\pi_*(\phi-\pi^*\psi_-\wedge\eta)=\sum_{i<j<k<l}\sigma_{ijkl}f^{ijkl},$$
is the Hodge dual of the fundamental $2$-form $\omega$ of an $\SU(3)$-structure on $\hg$, but it is easy to see that $$\sigma(f_3,f_4,f_5,f_6)=\sigma_{3456}=\phi_{3456}=0,$$  
 or equivalently $\sigma|_W=0$, contradicting Lemma \ref{lem2}. Hence $\gt$ admits no cocalibrated $\G$-structures.\par

\footnotesize{
\begin{longtable}{|l|l|l|}\caption{Explicit cocalibrated $\G$-structures $\phi=\frac{1}{2}\pi^*\omega^2+(\pi^*\psi_-)\wedge \eta$ on indecomposable nilpotent Lie algebras.\\ Coefficients $C_1,C_2$ are chosen so that $d(\frac{1}{2}\omega^2)=\psi_-\wedge d(\eta)$ holds, $B$ so that $\psi_+\wedge\psi_-=\frac{2}{3}\omega^3$ holds, and $A$ so that $\mathrm{det}(g)>0$  holds.}\label{tab6}\\\hline
&$\pi^*\omega$&\\\cline{2-2}
$\gt$&$\pi^*\psi_-$&Restrictions\\\cline{2-2}
&$\eta$&\\\hline
\endfirsthead
\multicolumn{3}{l}{\textit{Continues from the previous page}} \\\hline
\endhead
\hline
\multicolumn{3}{r}{\textit{Continues to the next page}}
\endfoot
\hline
\endlastfoot
\multicolumn{3}{|l|}{$2$-step nilpotent Lie algebras}\\\hline
&$e^{24}-e^{13}+e^{56}$&\\\cline{2-2}
$37A$&$-e^{125}+e^{236}+e^{146}-e^{345}$&\\\cline{2-2}
&$e^7$&\\\hline
&$2(e^{13}+e^{24}-e^{67})$&\\\cline{2-2}
$37B$&$2\sqrt{2}(e^{127}-e^{146}+e^{236}-e^{347})$&\\\cline{2-2}
&$e^5+e^7$&\\\hline
&$e^{12}-e^{34}-e^{67}$&\\\cline{2-2}
$37C$&$e^{236}-e^{247}-e^{137}-e^{146}$&\\\cline{2-2}
&$e^5$&\\\hline
&$-e^{12}+e^{34}-e^{67}$&\\\cline{2-2}
$37D$&$-e^{136}-e^{146}+e^{137}$&\\
&$-e^{147}-e^{246}+e^{237}$&\\\cline{2-2}
&$e^5$&\\\hline
&$e^{12}+e^{34}+e^{56}$&\\\cline{2-2}
$17$&$-e^{246}+e^{136}+e^{145}+e^{235}$&\\\cline{2-2}
&$e^7$&\\\hline
&$-e^{12}-e^{34}+e^{67}$&\\\cline{2-2}
$37B_1$&$-e^{146}+e^{137}-e^{247}-e^{236}$&\\\cline{2-2}
&$e^5$&\\\hline
&$e^{12}-e^{34}-e^{67}$&\\\cline{2-2}
$37D_1$&$e^{136}-e^{147}+e^{237}+e^{246}$&\\\cline{2-2}
&$e_5$&\\\hline
\multicolumn{3}{|l|}{$3$-step nilpotent Lie algebras}\\\hline
&$e^{25}+e^{14}-e^{36}$&\\\cline{2-2}
$357A$&$e^{123}-e^{246}+e^{156}-e^{345}$&\\\cline{2-2}
&$e^7$&\\\hline
&$AB(e^{12}+2e^{34}+e^{56})$&\\\cline{2-2}
$257A$&$B^2(\frac{1}{10}e^{125}-\frac{2}{5}e^{134}+4e^{136}+e^{145}$&$B=\frac{5A^3}{\sqrt{390}},$\\
&$+\frac{1}{5}e^{156}+4e^{235}-4e^{246}-\frac{1}{5}e^{345})$&$A>\sqrt{1791}$\\\cline{2-2}
&$A^2(30e^6+e^7)$&\\\hline
&$16(e^{13}+e^{25}+e^{46})$&\\\cline{2-2}
$257C$&$64(e^{124}-e^{156}+e^{236}+e^{345})$&\\\cline{2-2}
&$4(-e^3+e^7)$&\\\hline
&$-e^{12}+e^{35}+e^{45}+e^{46}$&\\\cline{2-2}
$257I$&$-e^{134}+e^{156}-e^{236}+e^{245}-e^{246}$&\\\cline{2-2}
&$e^7$&\\\hline
&$16(e^{13}+e^{25}+e^{46})$&\\\cline{2-2}
$257J$&$64(-e^{124}+e^{156}-e^{236}-e^{345})$&\\\cline{2-2}
&$4(e^3+e^7)$&\\\hline
&$e^{12}+e^{34}+e^{56}$&\\\cline{2-2}
$247A$&$e^{135}-e^{146}-e^{236}-e^{245}$&\\\cline{2-2}
&$e^7$&\\\hline
&$AB(e^{12}+e^{26}-e^{35}+e^{45}+e^{46})$&\\\cline{2-2}
&$B^2(-e^{123}+e^{124}-2e^{125}-e^{126}+$&$B=\frac{8}{25}A^3,$\\
$247B$&$+e^{134}-e^{135}-e^{136}-e^{145}+$&$0<A<\sqrt{\frac{7}{40}}$\\
&$+e^{156}-e^{235}-e^{236}-e^{245})$&\\\cline{2-2}
&$A^2(3e^5+e^6+e^7)$&\\\hline
&$16(e^{13}+e^{25}+e^{47})$&\\\cline{2-2}
$247C$&$64(e^{124}-e^{157}+e^{237}+e^{345})$&\\\cline{2-2}
&$4(e^4+e^6)$&\\\hline
&$e^{12}-e^{34}-e^{56}$&\\\cline{2-2}
$247D$&$e^{136}+e^{145}-e^{235}+e^{246}$&\\\cline{2-2}
&$e_7$&\\\hline
&$\frac{1}{16}(e^{15}+e^{16}-e^{25}-e^{34}+e^{56})$&\\\cline{2-2}
$247F$&$\frac{1}{64}(e^{123}-e^{124}+e^{145}+$&\\
&$2e^{235}+e^{236}+e^{246}-e^{356})$&\\\cline{2-2}
&$\frac{1}{4}(-e^4+e^7)$&\\\hline
&$\frac{1}{16}(e^{12}+e^{15}-e^{23}+e^{46})$&\\\cline{2-2}
$247I$&$64(e^{126}-e^{134}+$&\\
&$+e^{236}+e^{245}-e^{345})$&\\\cline{2-2}
&$\frac{1}{4}(e^4+e^7)$&\\\hline
&$e^{13}-e^{25}-e^{46}$&\\\cline{2-2}
$247J$&$-e^{124}+e^{126}-e^{145}+$&\\
&$+e^{236}+2e^{345}+e^{356}$&\\\cline{2-2}
&$e^7$&\\\hline
&$AB(-e^{13}+e^{15}-e^{25}+$&\\
&$+2e^{27}-e^{34}+e^{35}+$&\\
&$+2e^{45}-5e^{47})$&\\\cline{2-2}
$247L$&$B^2(e^{123}+e^{124}-e^{127}+$&$B=A^3,$\\
&$2e^{134}-3e^{135}+8e^{137}+2e^{147}+$&$0<A<\frac{1}{\sqrt{452}}$\\
&$-e^{157}-e^{234}-e^{237}-e^{345})$&\\\cline{2-2}
&$A^2(6e^4+e^6+4e^7)$&\\\hline
&$AB(e^{15}+e^{23}+e^{35}-2e^{46})$&\\\cline{2-2}
$247M$&$B^2(-2e^{124}-2e^{126}+$&$B=2A^3,$\\
&$-e^{134}-2e^{136}-e^{145}+$&$0<A<\frac{1}{\sqrt{32}}$\\
&$+e^{156}+e^{234}+e^{236}+e^{245}+e^{345})$&\\\cline{2-2}
&$A^2(4e^5+e^7)$&\\\hline
&$-e^{14}-e^{25}-e^{37}$&\\\cline{2-2}
$247N$&$e^{123}-e^{157}+e^{247}-e^{345}$&\\\cline{2-2}
&$e^6$&\\\hline
&$3\sqrt{3}(-e^{26}+e^{34}+e^{12}+e^{15})$&\\\cline{2-2}
$247O$&$3(-e^{123}+e^{146}+e^{236}+$&\\
&$+e^{245}-e^{356})$&\\\cline{2-2}
&$e^4-e^6+e^7$&\\\hline

&$\frac{1}{16}(e^{17}-e^{23}+e^{25}+$&\\
&$-e^{27}+e^{34}+e^{47})$&\\\cline{2-2}
$247P$&$\frac{1}{64}(e^{123}+2e^{134}+2e^{145}+$&\\
&$-e^{147}-e^{245}+e^{357})$&\\\cline{2-2}
&$\frac{1}{4}(e^6-e^7)$&\\\hline
&$e^{14}+e^{23}+e^{57}$&\\\cline{2-2}
$247Q$&$e^{125}-e^{137}+e^{247}+e^{345}$&\\\cline{2-2}
&$e^6$&\\\hline
&$2e^{12}+e^{15}+e^{26}+e^{34}+e^{56}$&\\\cline{2-2}
$157$&$e^{124}+e^{135}-e^{145}+$&\\
&$-e^{146}-e^{235}-e^{236}-e^{245}$&\\\cline{2-2}
&$e^7$&\\\hline
&$AB(\frac{1}{2}e^{12}+e^{34}+\frac{1}{2}e^{56}$&$B=\frac{1}{4}A^3$\\\cline{2-2}
$147A$&$B^2(-e^{123}+2e^{136}+e^{145}+$&\\
&$+e^{235}-e^{246}+e^{356})$&$A>5$\\\cline{2-2}
&$A^2(\frac{5}{2}e^4+e^7)$&\\\hline
&$AB(-e^{13}+4e^{24}+e^{56})$&\\\cline{2-2}
&$B^2(e^{123}+2e^{124}-2e^{125}+4e^{126}+$&$B=\frac{8}{\sqrt{11}}A^3,$\\
$147B$&$+e^{145}+\frac{1}{2}e^{146}-\frac{1}{2}e^{156}-2e^{234}+$&$A>\sqrt{19}$\\
&$+2e^{235}+1e^{256}+1e^{346}-\frac{1}{2}e^{356})$&\\\cline{2-2}
&$A^2(-4e^4+6e^5+e^7)$&\\\hline
&$AB(-e^{12}-e^{16}-e^{23}+$&\\
&$-e^{25}+e^{34}+e^{36}+e^{45})$&\\\cline{2-2}
$147D$&$B^2(-e^{123}-2e^{134}-e^{136}+$&$B=A^3,$\\
&$+e^{145}-e^{246}-e^{356})$&$0<A<\frac{1}{\sqrt{5}}$\\\cline{2-2}
&$A^2(e^4-e^6+e^7)$&\\\hline
&$\frac{2}{3}\sqrt{3}(e^{13}+$&\\
&$-e^{24}+e^{56})$&\\\cline{2-2}
$137A$&$\frac{4}{3}(e^{126}-e^{145}+$&\\
&$-e^{235}-e^{345}+e^{346})$&\\\cline{2-2}
&$e^7$&\\\hline
&$e^{13}-e^{24}+e^{56}$&\\\cline{2-2}
$137B$&$-e^{126}+e^{145}+$&\\
&$+e^{235}+e^{345}-e^{346}$&\\\cline{2-2}
&$e^7$&\\\hline
&$\frac{2}{3}\sqrt{3}(e^{13}-e^{24}+e^{56})$&\\\cline{2-2}
$137C$&$\frac{4}{3}(-e^{126}+e^{145}+e^{235}-e^{346})$&\\\cline{2-2}
&$e^7$&\\\hline
&$16(e^{15}+e^{23}+e^{35}-e^{46})$&\\\cline{2-2}
$257J_1$&$64(e^{124}+e^{136}-e^{234}-e^{256}-e^{345})$&\\\cline{2-2}
&$4(e^3+e^6+e^7)$&\\\hline
&$e^{23}-e^{16}+e^{45}$&\\\cline{2-2}
$247F_1$&$-e^{125}+e^{246}+e^{134}-e^{356}$&\\\cline{2-2}
&$e^7$&\\\hline
&$e^{23}+e^{17}+e^{45}$&\\\cline{2-2}
$247P_1$&$e^{125}+e^{247}+e^{134}-e^{357}$&\\\cline{2-2}
&$e^6$&\\\hline
&$e^{16}+e^{23}+e^{45}$&\\\cline{2-2}
$147A_1$&$e^{124}-e^{135}-e^{256}-e^{346}$&\\\cline{2-2}
&$e^7$&\\\hline
&$\frac{16}{\sqrt{3}}(e^{12}+2e^{34}+2e^{56}$&\\\cline{2-2}
$137A_1$&$\frac{16^2}{3}(e^{135}+e^{136}-e^{145}-\frac{3}{2}e^{146}+$&\\
&$-e^{236}-\frac{1}{2}e^{245}+e^{246})$&\\\cline{2-2}
&$e^7$&\\\hline
&$AB(e^{12}+e^{34}+e^{56})$&\\\cline{2-2}
$137B_1$&$B^2(e^{135}+2e^{136}+e^{145}+$&$B=A^3,$\\
&$+\frac{1}{2}e^{235}-e^{236}-2e^{245}-e^{246})$&$A>\sqrt{10}$\\\cline{2-2}
&$A^2(-2e^6+e^7)$&\\\hline
&$AB(e^{13}-2e^{26}-e^{34}+e^{45})$&\\\cline{2-2}
$147E(\muup)$&$B^2(-e^{123}-2e^{125}-e^{146}+$&$B=A^3,$\\
$\muup\neq 0,1$&$e^{245}+e^{356})$&$(A-3+2\muup)(A+3-2\muup)>0$\\\cline{2-2}
&$A^2((3-2\muup)e^6+e^7)$&\\\hline
&$AB(e^{13}+e^{26}+e^{34}-e^{45})$&\\\cline{2-2}
$147E_1(\muup)$&$B^2(e^{123}+2e^{125}+e^{146}+e^{245}+e^{356})$&$B=A^3,$\\\cline{2-2}
$\muup>1$&$A^2(5e^6+e^7)$&$A>5$\\\hline
\multicolumn{3}{|l|}{$4$-step nilpotent Lie algebras}\\\hline
&$e^{15}+e^{24}+e^{36}$&\\\cline{2-2}
$2457A$&$e^{123}-e^{146}+e^{256}+e^{345}$&\\\cline{2-2}
&$e^7$&\\\hline
&$2(-e^{24}+e^{15}-e^{36})$&\\\cline{2-2}
$2457B$&$2\sqrt{2}(e^{123}-e^{146}-e^{256}-e^{345})$&\\\cline{2-2}
&$-e^6+e^7$&\\\hline
&$-e^{26}-e^{46}+e^{23}+e^{15}$&\\\cline{2-2}
$2457C$&$e^{123}+e^{235}+e^{124}+e^{245}+$&\\
&$+e^{134}-e^{345}-e^{256}+e^{136}$&\\\cline{2-2}
&$e^7$&\\\hline
&$16(e^{15}-e^{23}+e^{46})$&\\\cline{2-2}
$2457D$&$64(e^{124}+e^{126}+e^{136}+$&\\
&$+e^{245}+e^{256}+e^{345})$&\\\cline{2-2}
&$4(e^3+e^7)$&\\\hline
&$16(e^{15}-e^{23}+e^{47})$&\\\cline{2-2}
$2457E$&$64(e^{124}+e^{137}+e^{257}+e^{345})$&\\\cline{2-2}
&$4(-e^4+e^6)$&\\\hline
&$e^{15}+e^{23}-e^{56}-e^{46}$&\\\cline{2-2}
$2457F$&$e^{125}+e^{124}+e^{136}-e^{256}-e^{345}$&\\\cline{2-2}
&$e^7$&\\\hline
&$e^{15}-e^{23}+e^{47}$&\\\cline{2-2}
$2457G$&$-e^{124}-e^{137}-e^{257}-e^{345}$&\\\cline{2-2}
&$e^6$&\\\hline
&$-e^{27}-e^{47}+e^{23}+e^{15}$&\\\cline{2-2}
$2457H$&$-e^{123}+e^{235}+e^{124}+e^{245}+$&\\
&$+e^{134}-e^{345}-e^{257}+e^{137}$&\\\cline{2-2}
&$e^6$&\\\hline
&$3(e^{15}+e^{23}-e^{56}-e^{46})$&\\\cline{2-2}
$2457I$&$3\sqrt{3}(e^{124}+e^{125}+$&\\
&$+e^{136}-e^{256}-e^{345})$&\\\cline{2-2}
&$e^4+e^7$&\\\hline
&$3(e^{15}+e^{23}-e^{56}-e^{46})$&\\\cline{2-2}
$2457J$&$3\sqrt{3}(e^{124}+e^{125}+$&\\
&$+e^{136}-e^{256}-e^{345})$&\\\cline{2-2}
&$e^4+e^7$&\\\hline
&$-e^{27}-e^{47}+e^{23}+e^{15}$&\\\cline{2-2}
$2457K$&$-e^{123}+e^{235}+e^{124}+e^{245}+$&\\
&$+e^{134}-e^{345}-e^{257}+e^{137}$&\\\cline{2-2}
&$e^6$&\\\hline
&$5(-e^{14}+e^{24}+e^{25}+e^{36})$&\\\cline{2-2}
$2457L$&$5\sqrt{5}(-e^{123}+e^{156}+$&\\
&$+e^{246}-e^{256}-e^{345})$&\\\cline{2-2}
&$-2e^6+e^7$&\\\hline
&$e^{15}+e^{24}+e^{37}$&\\\cline{2-2}
$2457M$&$e^{123}-e^{147}+e^{257}+e^{345}$&\\\cline{2-2}
&$e^6$&\\\hline
&$e^{13}-e^{24}+e^{57}$&\\\cline{2-2}
$2357A$&$-e^{125}-e^{237}-e^{147}-e^{345}$&\\\cline{2-2}
&$-\frac{1}{2}e^5+e^6$&\\\hline
&$e^{13}-e^{24}+e^{57}$&\\\cline{2-2}
$2357B$&$-e^{125}-e^{237}-e^{147}-e^{345}$&\\\cline{2-2}
&$e^6$&\\\hline
&$e^{13}-e^{24}+e^{57}$&\\\cline{2-2}
$2357C$&$-e^{125}-e^{237}-e^{147}-e^{345}$&\\\cline{2-2}
&$e^6$&\\\hline
&$e^{13}-e^{24}+e^{57}$&\\\cline{2-2}
$2357D$&$-e^{125}-e^{237}-e^{147}-e^{345}$&\\\cline{2-2}
&$e^6$&\\\hline
&$-e^{12}+e^{34}-e^{56}$&\\\cline{2-2}
$1357A$&$e^{135}+e^{136}+$&\\
&$-e^{145}+e^{146}+e^{235}+e^{246}$&\\\cline{2-2}
&$e^7$&\\\hline
&$AB(-e^{13}-e^{24}+\frac{1}{4}e^{45}-e^{56})$&\\\cline{2-2}
$1357B$&$B^2(2e^{126}+e^{145}+$&$B=A^3,$\\
&$+\frac{1}{2}e^{156}-e^{235}-\frac{1}{2}e^{346})$&$A>\sqrt{3}$\\\cline{2-2}
&$A^2(-4e^4+e^7)$&\\\hline
&$16(-e^{12}+2e^{34}-e^{36}+e^{45})$&\\\cline{2-2}
$1357C$&$32(-2e^{124}-3e^{134}+4e^{145}+$&\\&$+2e^{156}-4e^{235}-2e^{246}-2e^{346})$&\\\cline{2-2}
&$4(e^4+e^5+e^7)$&\\\hline
&$e^{12}-e^{34}-e^{56}$&\\\cline{2-2}
$1357D$&$e^{136}+e^{145}-e^{146}+$&\\
&$-e^{235}+e^{236}+e^{246}$&\\\cline{2-2}
&$e^7$&\\\hline
&$256(e^{12}+e^{13}+e^{35}-e^{46}+e^{56})$&\\\cline{2-2}
&$4096(e^{123}-e^{124}+e^{125}-2e^{126}+$&\\
$1357F$&$+e^{136}+e^{145}-e^{234}+e^{236}+$&\\
&$+e^{245}-e^{246}-e^{256})$&\\\hline
&$16(2e^5+e^7)$&\\\cline{2-2}
&$2(e^{12}-e^{34}-2e^{56})$&\\\cline{2-2}
$1357G$&$4e^{136}+4e^{145}-4e^{146}-4e^{235}+$&\\
&$-4e^{236}-4e^{245}+9e^{246}$&\\\cline{2-2}
&$e^7$&\\\hline
&$2(-e^{12}+2e^{34}-e^{56})$&\\\cline{2-2}
$1357H$&$4(e^{135}+e^{145}+e^{146}+$&\\
&$+e^{235}-e^{236}+2e^{245})$&\\\cline{2-2}
&$e^7$&\\\hline
&$3(e^{15}+e^{24}+e^{36})$&\\\cline{2-2}
$1357I$&$3\sqrt{3}(e^{123}-e^{146}+e^{256}+e^{345})$&\\\cline{2-2}
&$e^5+e^6+e^7$&\\\hline
&$81(-e^{13}+2e^{23}+e^{25}-e^{46}+e^{56})$&\\\cline{2-2}
$1357J$&$729(2e^{123}+3e^{124}-2e^{125}+e^{136}$&\\
&$+e^{146}-e^{256}-e^{345})$&\\\cline{2-2}
&$9(2e^3+3e^4-2e^5-e^6+e^7)$&\\\hline
&$\frac{27}{2}\sqrt{6}(e^{12}+e^{34}+e^{56})$&\\\cline{2-2}
$1357L$&$\frac{81}{2}(6e^{135}-2e^{136}-3e^{146}-3e^{236}$&\\&
$-9e^{245}+3e^{246})$&\\\cline{2-2}
&$9(-e^6+e^7)$&\\\hline
&$e^{12}-e^{34}-e^{56}$&\\\cline{2-2}
$1357O$&$e^{136}+\frac{5}{4}e^{145}-\frac{1}{2}e^{146}-e^{235}+$&\\
&$-\frac{1}{2}e^{245}+e^{246}$&\\\cline{2-2}
&$e^7$&\\\hline
&$\frac{1}{\sqrt{2}}(e^{12}+e^{34}+e^{56})$&\\\cline{2-2}
$1357P$&$\frac{1}{2}(-e^{135}+e^{146}-e^{235}+$&\\
&$+e^{236}+2e^{245}+e^{246})$&\\\cline{2-2}
&$e^7$&\\\hline
&$e^{12}+e^{34}+e^{56}$&\\\cline{2-2}
$1357Q$&$-e^{135}+e^{146}-e^{235}+$&\\
&$+e^{236}+e^{245}+e^{246}$&\\\cline{2-2}
&$e^7$&\\\hline
&$-e^{12}-e^{34}+e^{56}$&\\\cline{2-2}
$1357R$&$-e^{135}-e^{136}+e^{145}+$&\\
&$+e^{235}+e^{245}+e^{246}$&\\\cline{2-2}
&$e^7$&\\\hline
&$e^{15}+e^{24}+e^{36}$&\\\cline{2-2}
$2457L_1$&$-e^{123}-e^{235}-e^{134}+$&\\
&$-2e^{345}-e^{256}+e^{146}$&\\\cline{2-2}
&$e^7$&\\\hline
&$e^{125}-e^{237}+e^{147}-e^{345}$&\\\cline{2-2}
$2357D_1$&$e^{13}+e^{24}-e^{57}$&\\\cline{2-2}
&$e^6$&\\\hline
&$16(e^{12}+e^{35}-e^{46}+e^{56})$&\\\cline{2-2}
$1357F_1$&$64(-e^{134}+e^{136}+e^{145}-e^{146}+$&\\
&$-e^{234}-e^{245}-e^{246}-e^{256})$&\\\cline{2-2}
&$4(-e^4+e^6+e^7)$&\\\hline
&$2(e^{12}+2e^{34}+2e^{56})$&\\\cline{2-2}
$1357P_1$&$4(-e^{136}-e^{145}-e^{235}+e^{246})$&\\\cline{2-2}
&$e^7$&\\\hline
&$2(-e^{12}-e^{34}+2e^{56})$&\\\cline{2-2}
$1357Q_1$&$4(-e^{135}+\frac{17}{4}e^{145}-e^{146}+e^{235}+$\\
&$-e^{236}-e^{245}+\frac{9}{4}e^{246})$&\\\cline{2-2}
&$e^7$&\\\hline
&$\frac{(-\muup(\muup+1))^\frac{3}{2}}{\muup^2(\muup+1)^2}(-e^{12}-e^{34}+e^{56})$&\\\cline{2-2}
$1357M(\muup)$&$-\frac{1}{\muup(\muup+1)^4}[(\muup+1)^3e^{135}+e^{146}+$&\\
$\muup(\muup+1)<0$&$-e^{235}-\muup e^{236}-(\muup+1)e^{245}-e^{246}]$&\\\cline{2-2}
&$e^7$&\\\hline
&$AB(e^{12}-e^{34}+e^{36}+e^{45}+\frac{1}{\muup+1}e^{56})$&\\\cline{2-2}
&$B^2(e^{135}-e^{146}+(2\muup^2+3\muup+1)e^{234}+$&\\
$1357M(\muup)$&$+(-\frac{\muup}{\muup+1}e^{235}-\muup e^{236}-(\muup+1)e^{245}+$&$B=\frac{(\muup(\muup+1)^\frac{3}{2})}{\muup(\muup+1)^3}A^3,$\\
$\muup(\muup+1)>0$&$+\frac{\muup}{\muup+1}e^{246})$&$A>\frac{\sqrt{2\muup^4+6\muup^3+9\muup^2+6\muup+1}(\muup+1)^3(2\muup+1)}{(\muup^2+2\muup+2)\muup^2}$\\\cline{2-2}
&$A^2(\frac{2(\muup+1)^3(\muup+1\frac{1}{2})}{\muup(\muup^2+2\muup+2)}e^5-\frac{2(\muup+1)^2(\muup+\frac{1}{2})}{\muup(\muup^2+2\muup+2)}e^6+e^7)$&\\\hline
&$AB(-(\muup+2)e^{12}-e^{34}+\frac{1}{\muup+2}e^{56})$&\\\cline{2-2}
&$B^2(-\muup-2)^{-\frac{3}{2}}(-2e^{126}+2e^{134}+\frac{4}{\muup+2}e^{135}+$&$\psi_+\wedge\psi_-=\frac{2}{3}\omega^3,$\\
$1357N(\muup)$&$+2e^{146}+\frac{2}{\muup+2}e^{156}-\frac{2\muup}{\muup+2}e^{236}+$&$A>\frac{-2\muup+2}{\sqrt{-\muup-2}}$\\
$\muup+2<0$&$-2e^{245}+\sqrt{(-\muup-2)}e^{246}+\frac{2}{\muup+2}e^{346})$&\\\cline{2-2}
&$A^2((-\muup^2-\muup+2)e^3+e^7)$&\\\hline
&$AB(e^{14}+e^{23}+e^{56})$&\\\cline{2-2}
&$B^2(e^{123}-\frac{1}{\muup+1}e^{125}+C_1e^{126}-2e^{135}+$&$d(\frac{1}{2}\omega^2)=\psi_-\wedge d(\eta),$\\
$1357N(\muup)$&$+(\muup+1)e^{136}-\muup e^{146}-e^{156}+$&$\psi_+\wedge\psi_-=\frac{2}{3}\omega^3,$\\
$\muup+2>0,\muup\neq -1$&$+\muup e^{236}+(\muup+2)e^{245}-e^{346})$&$\mathrm{det}(g)>0$\\\cline{2-2}
&$A^2(C_2e^3+\frac{1}{\muup+1}e^5+e^7)$&\\\hline
&$AB(e^{12}-5e^{14}-2e^{15}-e^{23}+$&\\
&$-3e^{34}-e^{35}-e^{46})$&\\\cline{2-2}
&$B^2(2e^{123}-2e^{124}-3e^{126}+e^{134}+$&$B=A^3,$\\
$1357N(\muup)$&$+2e^{135}+e^{136}+3e^{146}+e^{156}+$&$A>\sqrt{\frac{119}{2}}$\\
$\muup=-1$&$+e^{234}+e^{236}-e^{245}+e^{346})$&\\\cline{2-2}
&$A^2(\frac{1}{2}e^2-e^5+3e^6+e^7)$&\\\hline
&$\muup\sqrt{\frac{(\muup-1)^2}{(\muup+2)^2}}(e^{12}+$&\\
&$+\muup e^{34}+e^{56})$&\\\cline{2-2}
$1357S(\muup)$&$\muup^2\frac{(\muup-1)^2}{(\muup+2)^2}(\frac{3}{\muup-1}e^{135}+$&\\
$\muup>1$&$+e^{136}+e^{145}+e^{235}+$&\\
&$-\frac{\muup+2}{\muup-1}e^{236}-e^{245}-e^{246})$&\\\cline{2-2}
&$e^7$&\\\hline
&$-\left(\frac{\muup-2}{\muup-1}\right)^{\frac{3}{2}}\frac{(\muup-1)^2}{\muup-2}(-e^{12}+$&\\
&$+(-\muup+2)e^{34}-e^{56})$&\\\cline{2-2}
$1357S(\muup)$&$\left(\frac{\muup-2}{\muup-1}\right)^{2}\left(\frac{(\muup-1)^2}{\muup-2}\right)^2(-\frac{1}{\muup-1}e^{135}+$&\\
$\muup<1$&$-e^{136}+e^{145}-e^{235}+$&\\
&$-\frac{\muup-2}{\muup-1}e^{236}+e^{245}-e^{246})$&\\\cline{2-2}
&$e^7$&\\\hline
&$\sqrt{\frac{\muup^2}{\muup^2+1}(\frac{1}{\muup}e^{12}+\muup e^{34}+e^{56})}$&\\\cline{2-2}
$1357QRS_1(\muup)$&$\muup^2(\muup^2+1)(-\frac{1}{\muup}e^{135}+2e^{145}+e^{146}+$&\\
$\muup\neq 0$&$+e^{235}+e^{236}+\frac{1}{\muup}e^{245}-\muup e^{246})$&\\\cline{2-2}
&$e^7$&\\\hline
\multicolumn{3}{|l|}{$5$-step nilpotent Lie algebras}\\\hline
&$\frac{1}{\sqrt{4}}(e^{16}-2e^{25}-\frac{1}{4}e^{34}+4e^{56})$&\\\cline{2-2}
$23457C$&$\frac{1}{4}(-\frac{3}{4}e^{124}+e^{135}+e^{146}+e^{236}-e^{245})$&\\\cline{2-2}
&$e^7$&\\\hline
&$\sqrt{\frac{3}{4}}(-e^{12}-\frac{1}{2}e^{34}+2e^{56})$&\\\cline{2-2}
$23457D$&$\frac{3}{4}(e^{135}+e^{146}-e^{235}+e^{236}-e^{245})$&\\\cline{2-2}
&$e^7$&\\\hline
&$AB(e^{13}+e^{24}-2e^{35}+$&\\
&$+2e^{37}-e^{45}-e^{57})$&$B=2A^3,$\\\cline{2-2}
$23457E$&$B^2(\frac{1}{2}e^{124}+2e^{125}-2e^{127}+e^{134}+$&$0<A<\frac{\sqrt{2}}{4}$\\
&$+e^{145}-e^{157}-e^{235}+e^{245}-e^{247})$&\\\cline{2-2}
&$A^2(2e^5+e^6)$&\\\hline
&$AB(e^{15}+e^{24}-e^{34}+e^{36}+e^{45}+2e^{56})$&\\\cline{2-2}
$23457G$&$B^2(-3e^{123}-3e^{125}+e^{135}+$&$B=A^3,$\\
&$+e^{146}+e^{236}-e^{245})$&$A>\sqrt{11}$\\\cline{2-2}
&$A^2(-3e^4+2e^6+e^7)$&\\\hline
&$\sqrt{\frac{3}{4}}(-e^{12}-\frac{1}{2}e^{34}+2e^{56})$&\\\cline{2-2}
$13457D$&$\frac{3}{4}(e^{135}+e^{146}-e^{235}+e^{236}-e^{245})$&\\\cline{2-2}
&$e^7$&\\\hline
&$16(e^{12}-e^{35}+e^{46})$&\\\cline{2-2}
$13457F$&$64(e^{134}+e^{136}+e^{145}+$&\\
&$+e^{156}-e^{236}-e^{245})$&\\\cline{2-2}
&$4(-e^4+e^5+e^6+e^7)$&\\\hline
&$81(e^{15}-e^{23}+e^{24}+e^{26}+$&\\&$-e^{34}-3e^{36}-e^{46})$&\\\cline{2-2}
$12457A$&$27(-e^{124}+e^{126}-e^{136}+e^{146}+$&\\
&$-e^{245}-2e^{256}-e^{345})$&\\\cline{2-2}
&$3(e_1+e_3+e_5-e_6+e_7)$&\\\hline
&$AB(e^{15}+e^{23}+2e^{24}+2e^{26}+$&\\
&$+e^{34}+2e^{36}-e^{46})$&\\\cline{2-2}
$12457B$&$B^2(-e^{123}-e^{134}+e^{146}+$&$B=3A^3,$\\
&$-2e^{256}-e^{345})$&$A>\frac{\sqrt{26}}{3}$\\\cline{2-2}
&$A^2(-2e^3+e^6+e^7)$&\\\hline
&$AB(-4e^{12}-e^{15}-e^{24}+4e^{26}+$&\\
&$+2e^{36}+e^{46}+e^{56})$&\\\cline{2-2}
&$B^2(e^{123}+\frac{5}{2}e^{124}+\frac{5}{2}e^{125}+$&\\
$12457C$&$+e^{126}+e^{134}+e^{135}+e^{146}+$&$B=2A^3,$\\
&$+e^{234}-e^{235}-\frac{1}{2}e^{256}+\frac{1}{2}e^{345})$&$A>\frac{\sqrt{1097}}{4}$\\\cline{2-2}
&$A^2(5e^3-4e^4+2e^5+e^7)$&\\\hline
&$AB(2e^{13}-3e^{15}-4e^{26}+e^{35}+$&\\
&$+2e^{36}+2e^{45}+2e^{46}+e^{56})$&\\\cline{2-2}
&$B^2(-6e{124}+4e^{125}+3e^{126}+$&$B=16A^3,$\\
$12457D$&$+3e^{134}-e^{135}-2e^{145}+e^{146}+$&$A>\frac{\sqrt{2478}}{2}$\\
&$-2e^{234}-\frac{1}{2}e^{256}+\frac{1}{2}e^{345})$&\\\cline{2-2}
&$A^2(-140e^1+52e^3-14e^4-8e^6+e^7)$&\\\hline
&$AB(-2e^{12}+e^{15}+2e^{15}+$&\\
&$+e^{24}-e^{25}-e^{46})$&\\\cline{2-2}
$12457E$&$B^2(e^{124}+e^{134}+e^{146}-e^{156}+$&$B=A^3,$\\
&$-e^{236}-e^{245}-2e^{256}-e^{345}$)&$A>2\sqrt{5}$\\\cline{2-2}
&$A^2(\frac{3}{2}e^3+\frac{1}{2}e^4+4e^5+2e^6+e^7)$&\\\hline
&$AB(2e^{13}-4e^{15}-e^{24}+$&\\
&$-2e^{35}+2e^{36}-e^{56})$&\\\cline{2-2}
$12457F$&$B^2(e^{123}+2e^{126}+2e^{134}+e^{146})$&$B=6A^3,$\\
&$-e^{235}-e^{236}-\frac{1}{2}e^{256}+\frac{1}{2}e^{345})$&$A>\sqrt{\frac{5}{3}}$\\\cline{2-2}
&$A^2(2e^4+e^7)$&\\\hline
&$AB(-4e^{13}-e^{24}+2e^{36}-2e^{56})$&\\\cline{2-2}
$12457G$&$B^2(-e^{125}+e^{126}+e^{134}+e^{145}+e^{146}+$&$B=8A^3,$\\
&$-e^{235}-e^{236}-\frac{1}{2}e^{256}+\frac{1}{2}e^{345})$&$A>\frac{\sqrt{654}}{8}$\\\cline{2-2}
&$A^2(-6e^3+6e^4+3e^5+2e^6+e^7)$&\\\hline
&$-e^{12}-e^{34}+e^{56}$&\\\cline{2-2}
$12457H$&$-2e^{135}+e^{136}+e^{145}-e^{146}+$&\\
&$-e^{236}+e^{245}$&\\\cline{2-2}
&$e^7$&\\\hline
&$\frac{1}{2}(-e^{12}-e^{34}+e^{56})$&\\\cline{2-2}
$12457I$&$\frac{1}{4}(-e^{135}-e^{146}-2e^{236}+2e^{245})$&\\\cline{2-2}
&$e^7$&\\\hline
&$2(-e^{12}+2e^{34}-e^{56})$&\\\cline{2-2}
$12457J$&$4(e^{135}+e^{146}-e^{236}+e^{245})$&\\\cline{2-2}
&$e^5+e^7$&\\\hline
&$16(-e^{12}+e^{34}-e^{56})$&\\\cline{2-2}
$12457K$&$64(e^{135}+e^{146}-e^{236}+e^{245})$&\\\cline{2-2}
&$4(e^5+e^7)$&\\\hline
&$AB(e^{12}+e^{34}+\frac{1}{4}e^{56})$&\\\cline{2-2}
&$B^2(-\frac{1}{2}e^{125}+e^{134}-e^{135}+7e^{136}+$&$B=\frac{2}{\sqrt{139}}A^3,$\\
$12457L$&$+7e^{145}+\frac{3}{4}e^{146}-\frac{1}{4}e^{156}-3e^{234}+$&$A>\frac{\sqrt{429701402}}{4}$\\
&$+10e^{235}-\frac{1}{4}e^{246}+\frac{3}{4}e^{256}+\frac{1}{2}e^{345}$&\\\cline{2-2}
&$A^2(-\frac{4169}{2}e^3-\frac{247}{4}e^4+$&\\
&$+\frac{565}{4}e^5+\frac{27}{2}e^6+e^7)$&\\\hline
&$AB(e^{13}+e^{26}-e^{45}+e^{56})$&\\\cline{2-2}
$12357A$&$B^2(-e^{124}-2e^{125}+e^{145}-e^{146}+$&$B=A^3,$\\
&$-2e^{234}-e^{235}-e^{236}-e^{345})$&$A>\sqrt{7}$\\\cline{2-2}
&$A^2(3e^4+e^5+e^6+e^7)$&\\\hline
&$e^{13}-e^{24}+e^{56}$&\\\cline{2-2}
$12357B$&$e^{126}-e^{235}+e^{346}-e^{145}$&\\\cline{2-2}
&$-\frac{1}{2}e^5+e^7$&\\\hline
&$AB(e^{13}-e^{24}+e^{56})$&\\\cline{2-2}
$12357C$&$B^2(-2e^{125}+e^{145}-e^{146}-e^{235}+$&$B=A^3,$\\
&$-e^{236}-e^{345})$&$A>3$\\\cline{2-2}
&$A^2(2e^4+e^6+e^7)$&\\\hline
&$AB(e^{13}+e^{24}+e^{34}-e^{56})$&\\\cline{2-2}
$12457J_1$&$B^2(4e^{125}+e^{126}+2e^{135}+e^{136}+$&$B=\frac{1}{2}A^3,$\\
&$+e^{145}+e^{146}-2e^{236}+2e^{245})$&$A>3\sqrt{6}$\\\cline{2-2}
&$A^2(\frac{5}{2}e^4+2e^5+e^6+e^7)$&\\\hline
&$AB(e^{12}+e^{34}+e^{56})$&\\\cline{2-2}
&$B^2(\frac{1}{2}e^{125}+9e^{135}+2e^{136}-2e^{145}+$&$B=\frac{2}{\sqrt{5}}A^3,$\\
$12457L_1$&$-\frac{1}{2}e^{146}-e^{234}-2e^{236}-2e^{245}+$&$A>5\sqrt{3}$\\
&$+e^{256}-\frac{1}{2}e^{345})$&\\\cline{2-2}
&$8e^3-4e^4-6e^5-2e^6+e^7$&\\\hline
&$32(-e^{12}-2e^{24}+e^{26}+$&\\
&$+2e^{36}-e^{45}-2e^{56})$&\\\cline{2-2}
$12457N_1$&$128(2e^{123}-e^{125}+2e^{135}+$&\\
&$+e^{146}-e^{234}-4e^{256}+2e^{245})$&\\\cline{2-2}
&$6e^4-2e^5+4e^6+4e^7$&\\\hline
&$e^{13}-e^{24}+e^{45}+e^{56}$&\\\cline{2-2}
$12357B_1$&$-e^{125}-e^{146}+e^{234}-e^{236}-e^{345}$&\\\cline{2-2}
&$e^7$&\\\hline

&$AB(e^{15}+e^{23}-e^{36}-e^{46})$&\\\cline{2-2}
$12457N(\muup)$&$B^2(e^{123}-4e^{124}-e^{125}-3e^{146}+$&$B=\frac{1}{4}A^3,$\\
$\muup\in\R$&$+e^{156}+e^{246}-3e^{256}-2e^{345})$&$A>\sqrt{288\muup^2+576\muup+690}$\\\cline{2-2}
&$A^2(\frac{3}{2}e^3+\frac{1}{8}e^4-\frac{3}{8}e^5+\frac{3}{2}(-\muup-1)e^6+e^7)$&\\\hline
&$-\frac{1}{\muup+1}(-e^{12}-(\muup+1)e^{34}-e^{56})$&\\\cline{2-2}
$123457I(\muup)$&$-\frac{1}{(\muup+1)^2}(-e^{135}-e^{146}+e^{236}-e^{245})$&\\\cline{2-2}
$\muup<-1$&$e^7$&\\\hline
&$-2e^{12}+4e^{34}-e^{56}$&\\\cline{2-2}
$123457I(\muup)$&$4(-2e^{135}-2e^{146}+e^{236}-e^{245})$&\\\cline{2-2}
$\muup=-1$&$e^7$&\\\hline
&$(\muup+1)^2(-e^{12}+(\muup+1)e^{34}-e^{56})$&\\\cline{2-2}
$123457I(\muup)$&$(\muup+1)^2(e^{135}+e^{146}-e^{236}+e^{245})$&\\\cline{2-2}
$\muup>-1$&$e^7$&\\\hline

&$AB(e^{12}-e^{24}+2e^{36}+e^{45}+2e^{56})$&\\\cline{2-2}
$12457N_2(\muup)$&$B^2(\frac{1}{2}e^{123}+\frac{1}{2}e^{125}-\frac{3}{2}e^{135}+$&$B=2A^3,$\\
$\muup\geq 0$&$+e^{146}-2e^{256}+e^{345})$&$A>\frac{\sqrt{2\muup^2+8\muup+59}}{4}$\\\cline{2-2}
&$A^2(-\frac{3}{2}e^3-e^4-(\muup+2)e^6+e^7)$&\\
\end{longtable}
}

\bibliographystyle{plain}
\bibliography{bibl}

\begin{thebibliography}{10}

\bibitem{Bry}
R.~Bryant.
\newblock Some remarks on {$G_2$}-structures.
\newblock In {\em Proceedings of {G}\"okova Geometry–Topology Conference}.
  International Press, 2005.

\bibitem{Cab}
M.~Cabrera.
\newblock On {R}iemannian manifolds with {$G_2$}-structure.
\newblock {\em Boll. Un. Mat. It.}, (10):99--112, 1996.

\bibitem{CMS}
M.~Cabrera, M.D. Monar, and A.F. Swann.
\newblock Classification of {$G_2$}-structures.
\newblock {\em London Math. Soc.}, (53):407–416, 1996.

\bibitem{Con}
D.~Conti.
\newblock Half-flat nilmanifolds.
\newblock {\em Math. Ann.}, (350):155--168, 2011.

\bibitem{CF}
D.~Conti and M.~Fern\'andez.
\newblock Nilmanifolds with a calibrated {$G_2$}-structure.
\newblock {\em Differ. Geom. Appl.}, (29):493--506, 2011.

\bibitem{CLSS}
V.~Cort\'es, T.~Leistner, L.~Sh\"afer, and F.~Schulte-Hengesbach.
\newblock Half-flat structures and special holonomy.
\newblock {\em Proc. London Math. Soc.}, (102):113--158, 2011.

\bibitem{CN}
D.~Crowley and J.~N\"ordstrom.
\newblock New invariants of {$G_2$}-structures.
\newblock Preprint ar{X}iv:1211.0269.

\bibitem{Fei}
T.~Fei.
\newblock Stable forms, vector cross products and their applications in
  {G}eometry.
\newblock Preprint ar{X}iv:1504.02807.

\bibitem{Fer}
M.~Fern\'andez.
\newblock An example of a compact calibrated manifold associated with the
  exceptional {L}ie group {$G_2$}.
\newblock {\em J. Differ. Geom.}, (26):367--370, 1987.

\bibitem{FFM}
M.~Fern\'andez, A.~Fino, and V.~Manero.
\newblock Laplacian flow of closed {$G_2$}-structures inducing nilsolitons.
\newblock Preprint ar{X}iv:1310.1864.

\bibitem{FG}
M.~Fern\'andez and A.~Gray.
\newblock Riemannian manifolds with structure group {$G_2$}.
\newblock {\em Annali di Mat. Pura Appl.}, (32):19--45, 1982.

\bibitem{Fre2}
M.~Freibert.
\newblock Cocalibrated structures on {Lie} algebras with codimension one
  abelian ideal.
\newblock {\em Ann. Glob. Anal. Geom.}, (42):537–563, 2012.

\bibitem{Fre1}
M.~Freibert.
\newblock Cocalibrated ${G}_2$-structures on products of four- and
  three-dimensional {L}ie groups.
\newblock {\em Diff. Geom. and its Appl.}, (31):349–373, 2013.

\bibitem{Gon}
M.P. Gong.
\newblock {\em Classification of nilpotent {L}ie algebras of dimension 7 (over
  algebraically closed fields and {$\R$})}.
\newblock PhD thesis, University of Waterloo, Ontario, Canada, 1998.

\bibitem{Gr}
S.~Grigorian.
\newblock Short-time behaviour of a modified {L}aplacian coflow of
  ${G}_2$-structures.
\newblock {\em Advances in Mathematics}, (248):378–415, 2013.

\bibitem{Hit}
N.~Hitchin.
\newblock The geometry of three-forms in six and seven dimensions.
\newblock {\em J. Differential Geom.}, (55):547--576, 2000.

\bibitem{KS}
T.~Kimura and M.~Sato.
\newblock A classification of irreducible prehomogeneous vector spaces and
  their relative invariants.
\newblock {\em Nagoya Math. J.}, (65):1--155, 1977.

\bibitem{Mal}
A.I. Mal'cev.
\newblock On a class of homogeneous spaces.
\newblock {\em {R}eprinted in Amer. Math. Soc. Translations Series 1},
  (9):276--307, 1962.

\bibitem{Nom}
K.~Nomizu.
\newblock On the cohomology of compact homogeneous spaces of nilpotent {L}ie
  groups.
\newblock {\em Ann. of Math.}, (59):531--538, 1954.

\bibitem{Rei}
W.~Reichel.
\newblock {\"U}ber die {T}rilinearen {A}lternierenden {F}ormen in 6 und 7
  {V}er\"anderlichen.
\newblock {D}issertation,{G}reifswald, 1907.

\bibitem{Sal}
S.~Salamon.
\newblock {\em Riemannian geometry and holonomy groups}.
\newblock Longman Scientific and Techincal, Harlow Essex, U.K., 1989.

\end{thebibliography}
\end{document}